\documentclass{forArXiv}

\usepackage{amsmath,amsfonts,amssymb}

\usepackage{mathrsfs, hyperref, tikz-cd, hyperref, tabu, mathtools, amsrefs} 

\title{Shimura operators for certain Hermitian symmetric superpairs}                                     
\author{Songhao Zhu}                 
\lastname{Zhu}  

\msc{17B10, 17B60, 05E10, 81Q60}    

\keywords{Shimura operators, symmetric superpairs, Lie superalgebras, interpolation polynomials}         

\address{%
Songhao Zhu\\               
School of Mathematics\\            
Georgia Institute of Technology\\
Atlanta, GA, 30332, USA\\
zhu.math@gatech.edu                
}

%
%


\DeclareMathOperator{\Ima}{Im}
\DeclareMathOperator{\Spn}{Span}
\DeclareMathOperator{\Hom}{Hom}
\DeclareMathOperator{\End}{End}

\DeclareMathOperator{\Ind}{Ind}
\DeclareMathOperator{\ad}{ad}
\DeclareMathOperator{\Ad}{Ad}
\DeclareMathOperator{\mult}{\mathit{m}}

\DeclareMathOperator{\diag}{diag}

\DeclareMathOperator{\std}{st}
\DeclareMathOperator{\opp}{op}

\newcommand{\dangle}[1]{\langle #1 \rangle}

\newcommand{\sltwo}{\mathfrak{sl}(2)}
\newcommand{\gl}{\mathfrak{gl}}

\newcommand{\Uk}{\mathfrak{U}}

\newcommand{\gk}{\mathfrak{g}}
\newcommand{\Gk}{\mathfrak{G}}
\newcommand{\Kk}{\mathfrak{K}}
\newcommand{\kk}{\mathfrak{k}}
\newcommand{\tk}{\mathfrak{t}}
\newcommand{\Tk}{\mathfrak{T}}
\newcommand{\hk}{\mathfrak{h}}
\newcommand{\Hk}{\mathfrak{H}}
\newcommand{\bk}{\mathfrak{b}}
\newcommand{\Bk}{\mathfrak{B}}
\newcommand{\ak}{\mathfrak{a}}
\newcommand{\Ak}{\mathfrak{A}}
\newcommand{\pk}{\mathfrak{p}}

\newcommand{\Nk}{\mathfrak{N}}
\newcommand{\Sk}{\mathfrak{S}}
\newcommand{\Pk}{\mathfrak{P}}

\newcommand{\Lk}{\mathfrak{L}}

\newcommand{\C}{\mathbb{C}}
\newcommand{\N}{\mathbb{N}}
\newcommand{\quoUk}{\Uk^\kk/(\Uk\kk)^\kk}
\newcommand{\pplus}{\mathfrak{p}^+}
\newcommand{\pminus}{\mathfrak{p}^-}
\newcommand{\ev}{_{\overline{0}}}
\newcommand{\od}{_{\overline{1}}}
\newcommand{\iso}[1]{{#1^{\texttt{iso}}}}
\newcommand{\ani}[1]{{#1^{\texttt{ani}}}}
\newcommand{\epu}{\epsilon}
\newcommand{\epr}{\alpha\bos}
\newcommand{\dau}{\delta}
\newcommand{\dar}{\alpha\fer}
\newcommand{\bos}{^{\mathrm{B}}}
\newcommand{\fer}{^{\mathrm{F}}}
\newcommand{\bnf}{^{\mathrm{B/F}}}

\newcommand{\zeroS}{\iso{\mathcal{S}}}
\newcommand{\HCso}{\Sigma_\perp}

\newcommand{\supp}[1]{\mathrm{Supp}\left(#1\right)}
\newcommand{\Hyp}{\mathbb{H}}
\newcommand{\RingEv}{\Lambda^\mathtt{0}}
\newcommand{\cH}{\mathscr{H}}

\newcommand{\hcHomoGK}{{\Gamma}}

\newcommand{\Wlambda}{W_\lambda}

\newtheorem{main}{Theorem}

\begin{document}


\maketitle

\begin{abstract}
We give a partial super analog of a result obtained by Sahi and Zhang relating Shimura operators and certain interpolation symmetric polynomials. In particular, we study the pair $(\gl(2p|2q), \gl(p|q)\oplus\gl(p|q))$, define the Shimura operators in $\mathfrak{U}(\gk)^\kk$, and using a new method, prove that their images under the Harish-Chandra homomorphism are proportional to Sergeev and Veselov's Type $BC$ interpolation supersymmetric polynomials under the assumption that a family of irreducible $\gk$-modules are spherical. We prove this conjecture using the notion of quasi-sphericity for Kac modules when $p=q=1$, and give explicit coordinates of (quasi-)spherical vectors.  
\end{abstract}

\section{Introduction}
In \cite{Shimura90} Shimura introduced a basis for the algebra of invariant differential operators on a Hermitian symmetric space, and formulated the problem of determining their eigenvalues. These eigenvalues can be expressed in terms of the images of these operators under the Harish-Chandra homomorphism, and Shimura's problem was solved in \cite{SZ2019PoSo} where it was shown that such images are specializations of the interpolation polynomials of Type \textit{BC} introduced by Okounkov \cite{Okounkov98}. 
In this paper, we propose the super analog (Theorem~\ref{main:RESULT}) for the Hermitian symmetric superpair $(\gk, \kk)$ with $\gk = \gl(2p|2q)$ and $\kk = \gl(p|q) \oplus \gl(p|q)$. We show that this follows from a conjecture (Conjecture~\ref{conj:sph}) on the $\kk$-sphericity of certain finite dimensional irreducible $\gk$-modules $V_\lambda$. We prove the conjecture when $p=q=1$ (Theorem~\ref{main:11Sph}). 

Let $\Uk$ be the universal enveloping algebra of $\gk$, and $\Uk^\kk$ be the centralizer of $\kk$ in $\Uk$. 
Definition~\ref{defn:ShimuraOp} gives a basis $D_\mu$ (Shimura operators) of $\Uk^\kk$ which involves the Cartan decomposition $\gk =\kk \oplus \pk=\kk \oplus \pplus \oplus \pminus$, and multiplicity-free $\kk$-decompositions of $\Sk(\pplus)$ and $\Sk(\pminus)$ into summands $W_\mu$ and $W^*_\mu$, naturally indexed by $\cH :=\mathscr{H}(p, q)$ of $(p,q)$-hook partitions, which are partitions $\mu$ satisfying $\mu_{p+1}\leq q$.
Next, we consider the Harish-Chandra homomorphism $\hcHomoGK$ defined on $\Uk^\kk$ associated with the pair $(\gk, \kk)$. The kernel of $\hcHomoGK$ is $ (\Uk \kk)^\kk := \Uk\kk\cap \Uk^\kk$ and the quotient $\quoUk$ is isomorphic to the space of invariant differential operators on the underlying symmetric superspace.
Let $\ak \subseteq \pk\ev$ be the maximal toral subalgebra that appears in the Iwasawa decomposition. The image of $\hcHomoGK$ can then be identified with $\RingEv(\ak^*)$, the algebra of even supersymmetric polynomials on $\ak^*$, in which we formulate a suitable specialization of the \textit{Type BC supersymmetric interpolation polynomials} by Sergeev and Veselov in \cite{SV09}. These polynomials $I_\mu$ are characterized by prescribed zeros (\textit{vanishing properties}) and again parameterized by $\cH$.
We give the following answer relating $D_\mu$ and $I_\mu$ for $\gk = \gl(2p|2q)$ and $\kk = \gl(p|q) \oplus \gl(p|q)$. 

\begin{main} \label{main:RESULT}
Assuming \textbf{Conjecture \ref{conj:sph}}, for all $\mu\in \cH$, we have $\hcHomoGK(D_\mu) = c_\mu I_\mu$ for $c_\mu \neq 0$.
\end{main}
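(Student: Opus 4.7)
The plan is to invoke the Sergeev--Veselov characterization of the interpolation polynomials $I_\mu$: within $\RingEv(\ak^*)$, the polynomial $I_\mu$ is the unique element, up to scalar, of degree at most $|\mu|$ that vanishes at the prescribed collection of points $\{\lambda+\rho : \lambda\in \cH,\ \mu \not\subseteq \lambda\}$. So the strategy is to show that $\hcHomoGK(D_\mu)$ satisfies the same vanishing, lies in the same degree range, and is nonvanishing at $\mu+\rho$.

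The link between $\hcHomoGK(D_\mu)$ and this vanishing locus is the action of $D_\mu$ on spherical vectors. Under Conjecture~\ref{conj:sph}, for each $\lambda \in \cH$ the module $V_\lambda$ carries a one-dimensional $\kk$-fixed subspace $V_\lambda^\kk = \C v_\lambda$. Since $D_\mu \in \Uk^\kk$, the element $D_\mu$ preserves $V_\lambda^\kk$ and acts on $v_\lambda$ by a scalar $\chi_\lambda(D_\mu)$. Applying the Iwasawa decomposition $\gk = \kk \oplus \ak \oplus \nk$ to write $D_\mu \equiv \hcHomoGK(D_\mu) \pmod{\kk \Uk + \Uk \nk}$, and using that $v_\lambda$ is $\kk$-fixed and of weight $\lambda+\rho$ after the appropriate super-$\rho$-shift, one identifies $\chi_\lambda(D_\mu) = \hcHomoGK(D_\mu)(\lambda + \rho)$. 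Since $\{\lambda+\rho : \lambda\in \cH\}$ is Zariski-dense in $\ak^*$, the polynomial $\hcHomoGK(D_\mu)$ is determined by these eigenvalues.

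The heart of the proof is then showing $\chi_\lambda(D_\mu) = 0$ whenever $\mu \not\subseteq \lambda$. By Definition~\ref{defn:ShimuraOp}, $D_\mu$ is a sum of products $X_i Y_i$ where $X_i$ runs over a basis of $W_\mu \subseteq \Sk(\pplus)$ and $Y_i$ over the dual basis of $W_\mu^* \subseteq \Sk(\pminus)$. Acting on $v_\lambda$, the result vanishes unless the $\kk$-type $W_\mu$ occurs in the relevant $\pk$-cyclic hull of $v_\lambda$ inside $V_\lambda$; a branching analysis for $V_\lambda$ restricted to $\kk$, which under the hook-partition parametrization of $\cH$ identifies the contributing $\kk$-types with the $W_\nu$ for $\nu \subseteq \lambda$, forces this component to be absent when $\mu \not\subseteq \lambda$. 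A PBW-filtration argument bounds the degree of $\hcHomoGK(D_\mu)$ by $|\mu|$, and the characterization theorem of \cite{SV09} then yields $\hcHomoGK(D_\mu) = c_\mu I_\mu$. Finally, evaluating at $\lambda = \mu$ and using the non-degeneracy of the natural duality $W_\mu \otimes W_\mu^* \to \C$ gives $c_\mu \neq 0$.

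The main obstacle, I anticipate, is the branching analysis. In the super setting $V_\lambda$ need not be completely reducible over $\kk$, and atypicality at certain $\lambda$ can introduce $\kk$-types beyond those present in the classical analog of Sahi--Zhang. Controlling these extras, and showing that they still do not produce $W_\mu$-components for $\mu \not\subseteq \lambda$, will likely require exploiting the hook-partition structure of $\cH$ together with an explicit realization of $V_\lambda$ as a quotient of a suitable induced or Kac module; it is presumably this analysis which is labelled the \emph{new method} in the abstract, and it is what forces the restriction $p=q=1$ in the unconditional result.
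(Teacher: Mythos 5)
Your outline is correct in substance and recovers a legitimate line of argument, but it is not the route the paper actually takes, and your anticipated obstacle is precisely what the paper's ``new method'' is designed to avoid. You propose to establish the vanishing $\hcHomoGK(D_\mu)(2\lambda^\natural+\rho)=0$ by branching $V_\lambda$ over $\kk$ and ruling out the $\kk$-type $W_\mu$ when $\mu\not\subseteq\lambda$ — as you note, this is delicate because $V_\lambda$ need not be $\kk$-completely-reducible in the super setting. The paper instead works with the parabolic (generalized Verma) module $M_{-\lambda}=\Ind_{\kk+\pminus}^\gk W_\lambda^*$, which \emph{does} have a transparent $\kk$-structure $M_{-\lambda}\cong\bigoplus_\nu W_\nu\otimes W_\lambda^*$ by Cheng--Wang (Proposition~\ref{prop:goodCW}), and equips it with the $\pminus/\kk/\pplus$ short grading. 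The vanishing of $D_\mu$ on the cospherical functional $\phi$ (Proposition~\ref{prop:Dmu.by0}) is then a purely degree- and $\kk$-type-bookkeeping argument on $M_{-\lambda}$: when $|\lambda|<|\mu|$ the $\xi^{-\mu}$-factor pushes below the bottom degree $-|\lambda|$; when $|\lambda|=|\mu|$, $\lambda\neq\mu$, the $\eta^\mu$-factor lands in $W_\mu\otimes W_\lambda^*$, on which $\phi$ vanishes. Conjecture~\ref{conj:sph} is then used only once, via Proposition~\ref{prop:kappaphi}, to transfer $\phi$ to the unique spherical $\kappa$ on $V_\lambda$, and Theorem~\ref{thm:scalar} converts the eigenvalue on $\kappa$ into $\hcHomoGK(D_\mu)(2\lambda^\natural+\rho)$. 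So the paper never needs to know anything about $\kk$-composition factors of $V_\lambda$ itself, and atypicality is a non-issue.

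That said, your branching route can be made to work: one maps $V_\lambda$ to a quotient of the opposite parabolic induction $\Ind_{\kk+\pplus}^\gk W_\lambda$, whose $\kk$-restriction $\Sk(\pminus)\otimes W_\lambda$ is again explicit, and a weight-decomposition argument (no complete reducibility required) shows $\Hom_\kk(W_\mu,V_\lambda)=0$ for $\mu\not\subseteq\lambda$. This is closer in spirit to Sahi--Zhang's classical argument and gives you the larger vanishing set $\{\lambda:\mu\not\subseteq\lambda\}$, which overdetermines $I_\mu$; the paper's characterization (Theorem~\ref{thm:BCPolyRho}) needs only the finite set $\{\lambda\neq\mu:|\lambda|\leq|\mu|\}$, and the grading argument delivers exactly that.

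Two corrections worth flagging. First, the relevant degree bound and the degree of $I_\mu$ are $2|\mu|$, not $|\mu|$ (each of $\xi^{-\mu}$ and $\eta^\mu$ contributes $|\mu|$), and the evaluation points are at $2\lambda^\natural+\rho$ in the paper's normalization. Second, your closing speculation is off on both counts: the ``new method'' advertised in the abstract is the cospherical functional on $M_{-\lambda}$ together with the degree grading, not a branching analysis; and the restriction to $p=q=1$ has nothing to do with controlling $\kk$-types — it arises solely in the unconditional verification of the sphericity Conjecture~\ref{conj:sph} via quasi-spherical vectors in Kac modules (Theorems~\ref{thm:b!=a-1} and~\ref{thm:b=a-1}). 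For the nonvanishing of $c_\mu$ you do not actually need to evaluate at $\mu$: the paper shows more directly that $D_\mu\notin\Uk\kk=\ker\hcHomoGK$ by the PBW decomposition $\Uk=\Sk(\pminus)\Sk(\pplus)\oplus\Uk\kk$, which is cleaner than appealing to non-degeneracy of a pairing on $W_\mu\otimes W_\mu^*$.
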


We briefly discuss the problem of sphericity here. Let $V_\lambda$ be the irreducible $\gk$-module of the same highest weight of $W_\lambda^*$ with respect to a compatible Borel subalgebra. In the non-super classic scenario, the $\kk$-sphericity of such $V_\lambda$ is completely determined by the  Cartan--Helgason theorem. Basically, a $\gk$-module is spherical if and only if its highest weight vanishes on the $\theta$-fixed part of the Cartan subalgebra and satisfies certain ``even" integrality. The sphericity is crucial in the proof of the result in \cite{SZ2019PoSo}.
Yet in the super setting, such result is only partially known, see \cite{a2015spherical}.

\begin{conjecture}\label{conj:sph}
Every irreducible $\gk$-module $V_\lambda$ for $\lambda \in \cH$ is spherical.
\end{conjecture}

\begin{main}\label{main:11Sph}
\textbf{Conjecture \ref{conj:sph}} is true for $p=q=1$.
\end{main}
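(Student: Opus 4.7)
The plan is to construct, for each $\lambda \in \cH = \mathscr{H}(1,1)$, an explicit spherical vector in $V_\lambda$ by working inside the appropriate Kac module. Set $\gk = \gl(2|2)$, $\kk = \gl(1|1)\oplus \gl(1|1)$, and recall the Cartan decomposition $\gk = \kk \oplus \pplus \oplus \pminus$ with both $\pplus$ and $\pminus$ abelian as Lie superalgebras. For $\lambda \in \cH$ let $K_\lambda := \Uk \otimes_{\Uk(\kk \oplus \pplus)} W_\lambda^*$ be the corresponding Kac module, where $\pplus$ acts trivially on $W_\lambda^*$. By PBW, $K_\lambda \cong \Sk(\pminus) \otimes W_\lambda^*$ as a vector space, and by the compatibility of Borel subalgebras $V_\lambda$ is its unique irreducible quotient $K_\lambda / N_\lambda$. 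A vector $v \in K_\lambda$ will be called \emph{quasi-spherical} if $\kk \cdot v \subseteq N_\lambda$; producing such a $v$ outside $N_\lambda$ is equivalent to producing a spherical vector in $V_\lambda$.

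I would then write down a quasi-spherical vector in coordinates. Fixing a weight basis of $\pminus$ and of $W_\lambda^*$, expand a candidate as $v = \sum_{I,J} c_{IJ}\, X^I \otimes w_J$, with the $X^I$ running over ordered PBW monomials in $\Sk(\pminus)$ and the $w_J$ over weight vectors of $W_\lambda^*$. The condition that the image of $v$ in $V_\lambda$ is killed by $\kk$ becomes a finite linear system in the $c_{IJ}$: the Cartan part of $\kk$ preserves weight spaces, the abelian-super structure of $\pminus$ keeps commutators with $\kk$-elements inside $\kk \oplus \pminus$, and the weight of a candidate spherical vector is fixed by $\lambda$. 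For $p = q = 1$ the low rank makes the system small enough to solve explicitly, producing a canonical quasi-spherical vector whose coordinates can be written down in closed form, which also yields the formulas promised in the abstract.

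The main obstacle is verifying that this canonical vector does not lie in $N_\lambda$. When $V_\lambda$ is typical one has $V_\lambda = K_\lambda$ and there is nothing further to check. In the atypical cases, which dominate the highest-weight picture for $\gl(2|2)$, I would use the well-documented block structure of $\gl(2|2)$ to pin down a singular vector generating $N_\lambda$, and then check that the quasi-spherical vector produced above has a nontrivial leading component with respect to the $\Sk(\pminus)$-filtration which is not swept up by $N_\lambda$. The hypothesis $\lambda \in \cH$ is crucial here: it guarantees that $W_\lambda^*$ actually appears in the $\kk$-decomposition of $\Sk(\pminus)$, so the corresponding top-layer contribution is unobstructed, paralleling the role of the Cartan--Helgason integrality condition in the classical Hermitian symmetric setting.
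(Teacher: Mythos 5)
Your proposal has a structural mistake that cannot be patched: the module you call a Kac module is not the Kac module the paper uses, and it does not have the finiteness property your argument relies on. You induce from $\kk\oplus\pplus$ (the Hermitian short grading $\gk = \pminus\oplus\kk\oplus\pplus$), producing $\Sk(\pminus)\otimes W_\lambda^*$. But $\pminus$ has a non-zero \emph{even} part, so $\Sk(\pminus)$ is infinite-dimensional and your $K_\lambda$ is infinite-dimensional for every non-trivial $\lambda$. Since $V_\lambda$ is finite-dimensional (Theorem~\ref{thm:CWWghtsFD}), the maximal submodule $N_\lambda$ is always non-trivial and the statement ``when $V_\lambda$ is typical one has $V_\lambda = K_\lambda$'' is simply false for your construction. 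The typical case is where you claim ``there is nothing further to check,'' so this wipes out half the argument. The paper's $K(\Breve{\lambda})$ is built from the \emph{parity} grading $\gk = \gk_{-1}\oplus\gk\ev\oplus\gk_1$ with $\gk\ev = \gl(2)\oplus\gl(2)$ (not $\kk$) and $\gk_{\pm 1}$ purely odd; that module is $\bigwedge(\gk_{-1})\otimes\Breve{W}(\Breve{\lambda})$, genuinely finite-dimensional, and in the typical case ($b\neq a-1$) it really is irreducible. That is what makes the case split work.

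A secondary problem: the hypothesis $\lambda\in\cH$ guaranteeing that $W_\lambda^*$ occurs in $\Sk(\pminus)$ does not give you an ``unobstructed top-layer contribution.'' In the summand $W_\mu^*\otimes W_\lambda^*$ of your module, the space $(W_\mu^*\otimes W_\lambda^*)^\kk = \Hom_\kk(W_\mu, W_\lambda^*)$ is generically zero because the $W_\mu$ are not self-dual; no copy of the trivial $\kk$-type appears in your top layer simply by virtue of $\lambda\in\cH$. The construction the paper uses with this grading is $M_{-\lambda} = \Ind_{\kk+\pminus}^\gk W_\lambda^*\cong\bigoplus W_\mu\otimes W_\lambda^*$ and there one only gets \emph{cospherical} functionals, via $\Hom_\kk(W_\lambda, W_\lambda)$; this is the content of Section~\ref{pairArg}, and it feeds into the proof of Theorem~\ref{main:RESULT}, not Theorem~\ref{main:11Sph}. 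To prove sphericity of $V_\lambda$ for $p=q=1$ you need the genuine Kac module $K(\Breve{\lambda})$, the explicit quasi-spherical vectors $\xi_{11}\xi_{22}\otimes v$ (typical case) or $\xi_{11}\otimes v$ (atypical case $b=a-1$), and the direct $\gl(2)\oplus\gl(2)$ weight computations of the paper — these do not appear in your sketch and are not replaceable by the linear-system heuristic you describe, because the very first step of picking a finite-dimensional ambient module is wrong.
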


Let us point out that this problem has its historical roots in \cites{KS1993, S94} for usual Lie algebras. Various constructions in this project also bear resemblance to other theories, e.g. the super version of the Capelli eigenvalue problem presented in \cite{SSS2020}. 
We hope to fully solve Conjecture~\ref{conj:sph} in future work. 
It is possible that the results of this paper can be generalized to the Hermitian symmetric superpairs constructed using Jordan superalgebras, c.f. \cite{SSS2020}*{Theorem 1.4}.
In \cite{SZ23}, a different approach is applied to circumvent the problem of sphericity. In the current paper, the main ideas are closer to the classic method in \cite{SZ2019PoSo} yet more ``algebraic", highlighted below.

To prove Theorem~\ref{main:RESULT}, we show that $\hcHomoGK(D_\mu)$ satisfies the above characterization of $I_\mu$, namely, (\textit{a}) $\hcHomoGK(D_\mu) \in \RingEv(\ak^*)$, and (\textit{b}) it satisfies the vanishing properties. 
For (\textit{a}), we check that $\Ima \hcHomoGK$ consists of even supersymmetric polynomials on $\ak^*$. We explain this in Section~\ref{supPairs} (Proposition~\ref{prop:imGammaSupSym}). This follows from the description in \cite{A2012} which is also independently proved in \cite{SZ23} for $\gk = \gl(2p|2q)$ and $\kk = \gl(p|q) \oplus \gl(p|q)$.

Proving (\textit{b}) is considerably harder and requires the Conjecture.
We consider the \textit{generalized Verma modules} $M_{-\lambda}$ which are parabolic induced modules and quotients of Verma modules. We show that $M_{-\lambda}$ is cospherical (having a spherical functional) in the ``$\kk$-finite" sense. Then we show that $D_\mu$ annihilates the spherical functionals on $M_{-\lambda}$ and the spherical functionals $\kappa$ on $V_\lambda$ precisely for those $\lambda$ where $I_\mu$ vanishes (Proposition~\ref{prop:Dmu.by0}, c.f. Theorem~\ref{thm:BCPolyRho}). The existence of $\kappa$ is a consequence of the sphericity of $V_\lambda$ (Conjecture~\ref{conj:sph}). This ``vanishing action" of $D_\mu$ leads to (\textit{b}) in the end.
This new technique instantly provides a different approach to the classical problem and sheds light on the potential applications of spherical representations.


We try to solve Conjecture~\ref{conj:sph} in Section~\ref{sphRep} by using the concept of \textit{quasi-spherical} vectors in Kac modules $K(\Breve{\lambda})$. Such vectors descend to spherical vectors in the irreducible quotient.
These $K(\Breve{\lambda})$ are either $V_\lambda$ or have irreducible quotients isomorphic to $V_\lambda$.
For $p = q = 1$, 
we show that $K(\Breve{\lambda})$ is always (quasi-)spherical (Theorems~\ref{thm:b!=a-1} and \ref{thm:b=a-1}) by identifying explicit coordinates of (quasi-)spherical vectors which descend to $V_\lambda$, proving Theorem~\ref{main:11Sph}. This method is purely algebraic.

Our paper is structured as follows.
Section~\ref{prelim} gives an overview of the necessary tools and background results. 
Section~\ref{supPairs} gives a concrete construction of the superpair $(\gl(2p|2q), \gl(p|q) \oplus \gl(p|q))$ and the associated restricted root systems. We also discuss the image of Harish-Chandra homomorphism $\hcHomoGK$. 
In Section~\ref{sphRep}, we discuss generalities of spherical representations and specialize the criteria in \cite{a2015spherical}. We then introduce quasi-spherical vectors in Kac modules, with which we prove Theorem~\ref{main:11Sph}. 
Finally, in Section~\ref{pairArg}, we use the generalized Verma modules to prove Theorem~\ref{main:RESULT}. We give an example explaining why \cite{a2015spherical} is insufficient for the vanishing properties. 
In the appendices, we discuss (\textit{1}) admissible deformed root systems and  Sergeev--Veselov polynomials,  (\textit{2}) detailed computations in Section~\ref{sphRep}, and (\textit{3}) a Weyl groupoid invariance formulation of  $\Ima \hcHomoGK$.

\section{Preliminaries} \label{prelim}
In this section, we review the basic definitions and results that will be used later. 
Most results are given without proofs as they can be found in the references.
For the remainder of this paper, we assume that the ground field is $\mathbb{C}$. We include 0 in $\mathbb{N}$. All Lie superalgebras (except for universal enveloping algebras) are assumed to be finite dimensional. We use the upper case $(\Gk, \Kk)$ (including their subspaces) for general discussion and the corresponding lower case frakturs when we specify.

\subsection{The Harish-Chandra Homomorphism} \label{subsec:IDecHCIso}
We let $(\Gk, \Kk)$ be a symmetric superpair such that the Iwasawa decomposition exists. We first recall some standard terminology for the definition of the Harish-Chandra homomorphism.
The Cartan decomposition $\Gk = \Kk \oplus \Pk$ is given by an (even) involution $\theta$ where $\Kk$ is the fixed point subalgebra and $\Pk$ is the $(-1)$-eigenspace of $\theta$. 
We choose a nondegenerate invariant form $b$ on $\Gk$ that is $\theta$-invariant. 
We assume the existence of toral subalgebra in $\Pk\ev$ and let $\Ak$ be a maximal one. Let $\Sigma := \Sigma(\Gk, \Ak)$ be the restricted root system of $\Gk$ with respect to $\Ak$.
It is well-known that given any root system $\Sigma$, it is possible to choose a positive system $\Sigma^+$ so that $\Sigma = \Sigma^+ \sqcup \Sigma^-$ where $\Sigma^- = -\Sigma^+$ denotes the set of negative roots. 
Let $\Gk_\alpha$ denote the root space of $\alpha\in \Sigma$, and $(-, -)$  the form on $\Ak^*$ induced from $b$.
If $\alpha \in \Sigma$, but $\frac{1}{2}\alpha \notin \Sigma$, we say $\alpha$ is \textit{indivisible}; on the other hand, if $2\alpha \notin \Sigma$, we say it is \textit{unmultipliable} and we denote the set of unmultipliable roots as $\Theta$. We let
\[
\Sigma\ev := \{\alpha \in \Sigma : \Gk _\alpha \cap \Gk\ev \neq 0 \}, \; \Sigma\od := \{\alpha \in \Sigma : \Gk _\alpha \cap \Gk\od \neq 0 \}
\]
be the sets of \textit{even} and \textit{odd} roots respectively (subscripts $\overline{0}$ and $\overline{1}$ denote parities). We denote the Weyl group of $\Sigma\ev$ as $W_0$, which is generated by the reflections of even simple roots.
A restricted root may be both even and odd.
A root $\alpha$ is said to be \textit{isotropic} if $( \alpha, \alpha ) = 0$ and \textit{anisotropic} otherwise. We use the superscript \texttt{iso} for isotropic roots.
We set $m_\alpha := \dim (\Gk_\alpha)\ev -\dim (\Gk_\alpha)\od$. The \textit{Weyl vector} is defined as
\[
\rho := \frac{1}{2}\sum_{\alpha \in \Sigma^+} m_\alpha \alpha.
\]
If we set $\rho_i := \frac{1}{2}\sum_{\alpha \in \Sigma^+} \dim(\Gk_\alpha)_i \alpha$ for $i = \overline{0}, \overline{1}$, then $\rho = \rho\ev-\rho\od$.

The Iwasawa decomposition depends on involutions $\theta$ on Lie superalgebras which are classified in \cites{SerganovaRLSA&SSS, SerganovaAuto}. For an in-depth discussion and recent developments on Iwasawa decomposition, see \cite{Sher22Iwasawa}. We define the \textit{nilpotent subalgebra} for $\Sigma^+$ as $\Nk := \bigoplus_{\alpha\in\Sigma^+}\Gk_\alpha$, and
It turns out that we will rely on the ``opposite" Iwasawa decomposition 
\begin{equation} \label{eqn:nak}
    \Gk = \Nk^- \oplus \Ak \oplus \Kk
\end{equation}
with the nilpotent subalgebra $\Nk^-$ for $-\Sigma^+$.
The Weyl vector for \textit{negative} restricted roots is thus $\rho^- = -\rho$. 
The Poincar\'e--Birkhoff--Witt theorem applied to $\Nk^- \oplus \Ak \oplus \Kk$ yields the following identity:
\[
\mathfrak{U} = \left( \mathfrak{U}\Kk + \mathfrak{N^-U}\right ) \oplus \mathfrak{S(A)}.
\]
Following \cite{A2012}, for $D\in \Uk$, we define $\pi(D)\in \mathfrak{S(A)}$ to be the unique element so that $D-\pi (D) \in \Uk\Kk + \mathfrak{N^-U}$,
and define $\hcHomoGK(D)(\lambda) = \pi(D)(\lambda+\rho^-)$ on $\mathfrak{U}^\Kk$. The map $\pi$ is called the \textit{Harish-Chandra projection} and $\hcHomoGK$ the \textit{Harish-Chandra homomorphism}.
As $\rho = -\rho^-$, we have
\begin{equation} \label{eqn:rhoshift}
    \hcHomoGK(D)(\lambda+\rho) = \pi(D)(\lambda).
\end{equation}
Note that this specializes to the well-known Harish-Chandra isomorphism \cite{KacHC} for the ``group case" pair $(\Gk \oplus \Gk, \Gk)$.

In \cite{A2012}, Alldridge introduced certain subalgebra $J_\alpha \subseteq \mathfrak{S}(\Ak)$ for $\alpha \in \Theta\od$ (odd and unmultipliable) used in the following theorem (\cite{A2012}*{Theorem~3.19}).
Let $(\Uk\Kk)^\Kk$ be $\Uk\Kk \cap \Uk^\Kk$ and 
\begin{equation} \label{eqn:Ja}
    J(\Ak) := \mathfrak{S\left(A\right)}^{W_0}  \bigcap_{\alpha \in \Theta \od} J_{\alpha}.
\end{equation}

\begin{theorem} \label{thm:prelHC}
The kernel of $\hcHomoGK$ is $(\Uk\Kk)^\Kk$ and the image of $\hcHomoGK$ is $J(\Ak)$.
\end{theorem}

When we specialize the discussion to the pair $(\gl(2p|2q), \gl(p|q)\oplus\gl(p|q))$, we are only going to need $J_\alpha$ where $\alpha \in \iso{\Theta\od}$ (odd, unmultipliable and isotropic; see Section~\ref{supPairs}).
In such a case, we can easily describe $J_\alpha$. Theorem~\ref{thm:prelHC} is proved independently using a different method in \cite{SZ23}.

\subsection{Highest Weights and Spherical Representations}\label{subsec:HWtSph} 
Following \cite{musson2012lie}*{Chapters~2, 3, 8  \& 14} (c.f. \cite{CWBook}), we present the conditions on highest weights a finite dimensional irreducible $\Gk$-module, and how they change with respect to different Borel subalgebras in terms of odd reflections.
Then we introduce the results in \cite{a2015spherical} on the highest weights of spherical representations.

Let $\Gk$ be a contragredient Lie superalgebra (see \cite{musson2012lie}*{Hypothesis~8.3.4}), and $\Bk$ be a Borel subalgebra containing a Cartan subalgebra $\Hk$ in $\Gk$. Let $(-, -)$ be the form on $\Hk^*$ that is induced from the one on $\Hk$.
If $\alpha$ is odd non-isotropic, then $2\alpha$ is an even root. Thus it makes sense to set the coroot $h_\alpha$ corresponding to the $\sltwo$-triple of $2\alpha$. Otherwise when $\alpha$ is even, then $h_\alpha$ is defined as usual. We say $\lambda \in \Hk^*$ is \emph{$\Bk$-dominant} if for any non-isotropic simple root $\alpha$ of $\Bk$, $\lambda(h_\alpha) \in \mathbb{N}$. 
Let $V(\lambda, \Bk)$ denote the irreducible $\Gk$-module of $\Bk$-highest weight $\lambda$.

Let $\Sigma_\Bk$ be the set of positive roots of $\Bk$ and $\alpha \in \Sigma_\Bk$ an odd isotropic root. Then $r_\alpha(\Sigma_\Bk) := \left(\Sigma_\Bk \setminus \{\alpha \}\right)\cup \{-\alpha\}$ is a positive system in which $-\alpha$ is simple, and we write $r_\alpha(\Bk)$ for the corresponding Borel subalgebra. We say $\Bk$ and $r_\alpha(\Bk)$ are \emph{adjacent} in this case.
For any two Borel subalgebras with the same even part, there is a sequence of adjacent Borel subalgebras connecting them.
It is also clear that $\Bk\ev = r_\alpha(\Bk)\ev$ and $\Gk_\alpha \subseteq \Bk, \Gk_{-\alpha}\subseteq r_\alpha(\Bk)$. For an odd isotropic root $\alpha$ and $\lambda \in \Hk^*$, we define the \textit{odd reflection of $\alpha$}  on $\lambda$ as follows:
\begin{equation} \label{eqn:oddRef}
    r_\alpha(\lambda)=\begin{cases}
\lambda & \text{ if } \left( \lambda , \alpha\right ) =0\\
\lambda-\alpha & \text{ if }   \left( \lambda,\alpha\right ) \neq 0
\end{cases}.
\end{equation}

The following theorem tells us how to change the highest weight of an irreducible module between two \emph{adjacent} Borel subalgebras. It may be applied consecutively to compute highest weights with respect to different Borel subalgebras. 

\begin{theorem} \label{thm:oddRefWghts}
Let $V$ be an irreducible module of $\Bk$-highest weight $\lambda$, and $\alpha$ be an odd simple root of $\Bk$, then $r_\alpha(\lambda)$ is the $r_\alpha(\Bk)$-highest weight of $V$. That is, $V(\lambda, \Bk) = V(r_\alpha(\lambda), r_\alpha(\Bk))$.
\end{theorem}

\begin{lemma}\label{lem:BorelSeq}
For each simple root $\alpha$ of $\Bk\ev$, there exists a Borel subalgebra $\Bk^\alpha$ obtained from $\Bk$ from a sequence of odd reflections such that either $\alpha$ or $\alpha/2$ is a simple root of $\Bk^\alpha$ and $\Bk^\alpha\ev = \Bk\ev$.
\end{lemma}

\begin{theorem} \label{thm:fDBorelWght}
Let $\Bk^\alpha$ be as above for a simple root $\alpha$ of $\Bk\ev$.
Let $V$ be $V(\lambda^\alpha, \Bk^\alpha)$ where $\lambda^\alpha$ is calculated by consecutively applying (\ref{eqn:oddRef}). Then $\dim V$ is finite dimensional if and only if $\lambda^\alpha$ is $\Bk^{\alpha}$-dominant for all simple roots $\alpha$ of $\Bk\ev$.
\end{theorem}

Now following \cite{a2015spherical}, let $(\Gk, \Kk)$ be a reductive symmetric superpair of even type. 
Suppose $\Hk \supseteq \Ak$ is a $\theta$-invariant Cartan subalgebra extended from $\Ak$. 
A $\Gk$-module $V$ is said to be $\Kk$-\textit{spherical} if $V^\Kk := \{v\in V: X.v =0 \textup{ for all }X\in \Kk \}$ is non-zero. A non-zero vector in $V^\Kk$ is called a $\Kk$-\textit{spherical vector}. When the context is clear, we simply say \textit{spherical} instead of $\Kk$-spherical.
We set $\lambda_\alpha := \dfrac{(\lambda|_\Ak, \alpha)}{(\alpha, \alpha)}$ for anisotropic $\alpha \in \Sigma$ as in \cite{a2015spherical}. Given a choice of positive system $\Sigma^+$ of $\Sigma = \Sigma(\Gk, \Ak)$ compatible with a Borel subalgebra $\Bk$,
we say $\lambda \in \Hk^*$ is \textit{high enough} if 
\begin{enumerate}
    \item $(\lambda, \beta )>0$ for any isotropic root $\beta \in \Sigma^+$,
    \item $\lambda_\alpha+m_\alpha+2m_{2\alpha}>0$ and $\lambda_\alpha+m_{\alpha}+m_{2\alpha}+1>0$ for any odd anisotropic indivisible root $\alpha$.
\end{enumerate}
In \cite{a2015spherical} the authors give a sufficient but not necessary condition for $\Kk$-sphericity, partially generalizing the classical Cartan--Helgason theorem, c.f. \cite{Helgason2}*{Theorem~4.1, Chapter~V}.
\begin{theorem}[\cite{a2015spherical}*{Theorem~2.3, Corollary~2.7}] \label{thm:AlldridgeSph}
If $\lambda_\alpha \in \mathbb{N}$ for $\alpha \in \Sigma^+\ev$, $\lambda|_{\Hk\cap \Kk} = 0$, and $\lambda$ is high enough, then $V(\lambda, \Bk)$ is spherical.
\end{theorem}


\subsection{The Cheng--Wang Decomposition} \label{subsec:CWDec}
We now recall a super analog of the Schmid decomposition \cites{Schmid69, FK90} from \cite{CW2000HDfL}. 
We introduce notation first.
A \textit{partition} $\lambda$ is a sequence of non-negative integers $(\lambda_1, \lambda_2, \dots)$ with only finitely many non-zero terms and $\lambda_i\geq \lambda_{i+1}$ (c.f. \cite{MacdSymFun}). 
Let $|\lambda| := \sum_{i}\lambda_i$ denote the \textit{size} of $\lambda$, 
and $\lambda'$ for which $\lambda'_i := |\{j: \lambda_j\geq i\}|$ the \textit{transpose} of $\lambda$. 
A $(p, q)$\textit{-hook partition} is a partition $\lambda$ such that $\lambda_{p+1}\leq q$. We write
\[
\cH = \mathscr{H}(p, q) := \left \{\lambda: \lambda_{p+1}\leq q \right \}, \quad  \cH^d = \mathscr{H}^d(p, q) := \left \{\lambda\in \mathscr{H}(p, q):|\lambda|=d \right \}.
\]
For $\lambda \in \mathscr{H}(p, q)$, we define a $(p+q)$-tuple
\begin{equation}\label{eqn:lNat}
    \lambda^\natural := \left(\lambda_1, \dots, \lambda_p,  \langle \lambda_1'-p \rangle, \dots, \langle \lambda_q'-p \rangle \right) 
\end{equation}
where $\langle x \rangle := \max \{x, 0\}$ for $x\in \mathbb{Z}$.
The last $q$ coordinates can be viewed as the lengths of the remaining columns after discarding the first $p$ rows of $\lambda$.

We let $E_{i, j}$ denote the standard matrix with 1 in the $(i, j)$-th entry, and 0 elsewhere. For $\gl(a|b)$, let $\tk = \{E_{i, i}: i = 1, \dots, a+b\}$ be the standard diagonal Cartan subalgebra, and set $\epsilon_i$ for $i = 1, \dots, a$ and $\delta_j$ for $j = 1, \dots, b$ as the standard coordinates of $\tk$ from top left to bottom right.
Let $\mathfrak{B}$ be any Borel subalgebra of $\gl(p|q)$ containing a Cartan subalgebra $\mathfrak{H}$. Then $\mathfrak{B}$ can be described by an $\epsilon\delta$ \textit{-chain}, $[X_1\cdots X_{p+q}]$, a sequence consisting of characters $X_i \in \mathfrak{H}^*$ 
where $X_i-X_{i+1}$ exhaust all simple roots defining $\Bk$. 
Therefore, the following two $\epsilon\delta$-chains
\[
[\epu_1\cdots\epu_p | \dau_1\cdots\dau_q], \quad [\dau_q\cdots\dau_1 | \epu_p\cdots\epu_1]
\]
respectively correspond to the standard Borel subalgebra $\bk^{\std}$ and the opposite one $\bk^{\opp}$.
For an $(p+q)$-tuple $(a_1, \dots, a_p | b_1, \dots, b_q)$, we associate an irreducible $\gl(p|q)$-module of highest weight $\sum_{i=1}^p a_i\epu_i + \sum_{j = 1}^q b_j \dau_j$ with respect to $\bk^{\std}$, denoted as $L(a_1, \dots, a_p, b_1, \dots, b_q)$. 

Let $V$ be a vector superspace and $\Sk(V)$ be the supersymmetric algebra on $V$. Then $\Sk(V)$ has a natural $\N$-grading $\bigoplus_{k\in\N}\Sk^n(V)$. Explicitly, $\Sk^d(V) = \bigoplus_{i+j = d}\Sk^i(V\ev)\otimes \bigwedge^j(V\od)$ as vector spaces.
The natural action of $\gl(p|q)$ on $\C^{p|q}$ gives an action of $\gl(p|q)\oplus \gl(p|q)$ on $\C^{p|q}\otimes \C^{p|q}$, which extends to an action on $\Sk(\C^{p|q}\otimes \C^{p|q})$. We record the following result regarding the $\gl(p|q)\oplus \gl(p|q)$-module structure on $\Sk(\C^{p|q}\otimes \C^{p|q})$. See \cite{CW2000HDfL}*{Theorem~3.2}.
\begin{theorem} \label{cite:CW}
    The supersymmetric algebra $\Sk(\C^{p|q}\otimes \C^{p|q})$ as a $\gl(p|q)\oplus \gl(p|q)$-module is completely reducible and multiplicity-free. In particular,
    \[
    \Sk^d(\C^{p|q}\otimes \C^{p|q}) = \bigoplus_{\lambda\in \mathscr{H}^d} L(\lambda^\natural)\otimes L(\lambda^\natural).
    \]
\end{theorem}

\subsection{Type \textit{BC} Interpolation Polynomials} \label{subsec:sPoly}
Finally, we introduce a key result in Sergeev and Veselov's paper regarding the interpolation supersymmetric polynomials. Specifically, we need a suitable version of Proposition 6.3 in \cite{SV09}, c.f. \cite{OO06}. We will point out how one may obtain this formulation using \cite{SV09} in Appendix \ref{app:GRS}. Set $i = 1, \dots, p$ and $j = 1, \dots, q$.
Let $\{e_i\}\cup\{d_j\}$ be the standard basis for $V = \mathbb{C}^{p+q}$ with coordinates $x_i$ and $y_j$.

\begin{Definition}\label{defn:noshiftBCSym}
A polynomial $f \in \mathfrak{P}(V)= \mathbb{C}[x_i, y_j]$ is said to be \textit{even supersymmetric} if :
    \begin{enumerate}
        \item[(i)] $f$ is symmetric in $x_i$ and $y_j$ separately and invariant under sign changes of $x_i$ and $y_j$;
        \item[(ii)] $f(X+e_i-d_j)=f(X)$ if $x_i+y_j = 0$.
    \end{enumerate}
We denote the subring of even supersymmetric polynomials as $\RingEv(V)$.
\end{Definition}

We denote $\sum \lambda_i e_i + \sum   \langle \lambda_j'-p  \rangle d_j$ by $\lambda^\natural$, c.f. (\ref{eqn:lNat}). Fixing $p, q$, we write $\cH$ for $\cH(p, q)$.
Set
\begin{equation} \label{eqn:genRho}
    \rho := \sum (2(p-i)+1-2q) e_i + \sum (2(q-j)+1) d_j.
\end{equation}
We are now ready to introduce the Type $BC$ interpolation supersymmetric polynomials in $\RingEv(V)$.

\begin{theorem}\label{thm:BCPolyRho}
For each $\mu \in \cH$, there exists a unique degree $2|\mu|$ polynomial $I_\mu \in \RingEv(V)$ such that
\[
I_\mu(2\lambda^\natural + \rho) = 0 
\]
for any $\lambda \in \cH$ such that $|\lambda| \leq |\mu|$, $\lambda \neq \mu$, and satisfies the normalization condition
\[
I_\mu(2\mu^\natural + \rho) = \prod_{(i, j)\in \mu}\left(\mu_i -j+\mu'_j -i +1 \right)\left(\mu_i +j-\mu'_j -i +2p-2q \right).
\]
Moreover, they constitute a basis for $\RingEv(V)$.
\end{theorem}

The coordinates of $\rho$ come from the restricted root system in Section~\ref{supPairs}, and the appearance of 2 in the above vanishing properties is also a result of choice of coordinates which is non-essential but convenient.
For a detailed discussion on how we obtain the above formulation, see Appendix~\ref{app:GRS}.



\section{Realizations of the Symmetric Superpairs and Root Data}\label{supPairs}
From now on, we fix $\Gk = \gk = \gl(2p|2q)$ and $\Kk = \kk =  \gl(p|q)\oplus \gl(p|q)$. We first describe the pair $(\gk, \kk)$ and important subspaces therein which are specified as the lower case counterparts of those appearing in Section~\ref{prelim}.
We then define Shimura operators and give specific root data.
Finally, we formulate the image of $\hcHomoGK$ as algebra of even supersymmetric polynomials.

\subsection{Realization of \texorpdfstring{$(\gk, \kk)$}{(g, k)}} \label{subsec:2Cartan}
We fix the following embedding of $\kk$ into $\gk$
\begin{equation}\label{eqn:emb}
\left( \left( \begin{array}{c|c}A_{p \times p} & B_{p \times q}\\ \hline C_{q \times p} & D_{q \times q}\end{array}\right ),\left( \begin{array}{c|c}A_{p \times p}' & B_{p \times q}'\\ \hline C_{q \times p}' & D_{q \times q}'\end{array} \right ) \right ) 
\mapsto 
\left( \begin{array}{c c |c c } A_{p \times p} & 0_{p \times p} & B_{p \times q} & 0_{p \times q} \\0_{p \times p} & A_{p \times p}' & 0_{p \times q} & B_{p\times q}' \\ \hline C_{q \times p} & 0_{q \times p} & D_{q \times q} & 0_{q \times q} \\ 0_{q \times p} & C_{q \times p}' & 0_{q \times q} & D_{q \times q}'  \end{array}\right ),
\end{equation}
and identify $\kk$ with its image.
We let $J := \frac{1}{2}\diag(I_{p \times p}, -I_{p \times p}, I_{q \times q}, -I_{q \times q})$, and $\theta := \Ad \exp(i\pi J)$. 
Then $\theta$ has fixed point subalgebra $\kk \cong \gl(p|q)\oplus \gl(p|q)$. 
We also have the \textit{Harish-Chandra decomposition}
\[
\gk = \pminus \oplus \kk \oplus \pplus
\]
where $\mathfrak{p}^+$ (respectively $\mathfrak{p}^-$) consists of matrices with non-zero entries only in the upper right (respectively bottom left) sub-blocks in each of the four blocks, that is, non-zero entries of matrices in $\pplus$ (respectively $\pminus$) are right to (respectively below) $X$ and above (respectively left to) $X'$ for $X = A, B, C, D$ in (\ref{eqn:emb}), indices omitted. 
Note $\pminus \oplus \kk \oplus \pplus$ is a grading by eigenvalues $(-1, 0, 1)$ of $\ad J$, also known as a \textit{short grading}.
This pair is an example of a Hermitian symmetric superpair which always admits the Harish-Chandra decomposition due to the existence of a complex structure, c.f. \cite{CFherm}.
Set $\pk := \pminus\oplus \pplus$, so $\gk = \kk \oplus \pk$ with $\theta|_\pk = -\mathrm{Id}_\pk$.

In our theory, we need to work with a $\theta$-stable Cartan subalgebra $\hk$ of $\gk$ extended from the toral subalgebra $\ak \subseteq \pk\ev$. 
Note in \cite{AHZ2010Crtf}*{Section~4}, it is showed that $(\gl(r+p|s+q), \gl(r|s)\oplus\gl(p|q))$ is of even type if and only if $(r-p)(s-q)\geq 0$, satisfied by our choice $p = r, q = s$.
We present a construction of $\hk$ and $\ak$ using a certain Cayley transform as follows. 
Set $i = 1, \dots, p$ and $j = 1, \dots, q$.

Recall that $\tk$ denotes the diagonal Cartan subalgebra. On $\gl(2p|2q)$, we set $\epu_i^+:= \epu_i$, $\epu_i^-:=\epu_{p+i}$, $\dau_j^+:=\dau_j$, and $\dau_j^-:=\dau_{q+j}$ on $\tk$. 
Let
\[
\gamma_i\bos := \epu_i^+-\epu_i^-, \quad \gamma_j\fer:= \dau_j^+-\dau_j^-.
\]
These are the \textit{Harish-Chandra strongly orthogonal roots}. We further denote $\HCso := \{\gamma_k\bnf\}$ \footnote{Here $\mathrm{B}$ indicates the Boson--Boson block (top left) and $\mathrm{F}$ the Fermion--Fermion block (bottom right), c.f. \cite{A2012}.}.
We set $A_{i, i'} := E_{i, i'}$ for $1 \leq i, i' \leq 2p$ and $D_{j, j'} := E_{2p+j, 2p+j'}$ for $1 \leq j, j' \leq 2q$.
Associated with each $\gamma\bos_i$ is an $\sltwo$-triple spanned by $A_{i, i}-A_{p+i, p+i}, A_{i, p+i}$ and $A_{p+i, i}$ (similarly for $\gamma\fer_j$ with $D_{j, j'}$). It is not hard to see that all $(p+q)$ $\sltwo$-triples commute. 
Define 
\begin{align*}
    c\bos_i &:= \Ad \exp\left( {\frac{\pi}{4}}\sqrt{-1}(-A_{i, p+i} - A_{p+i, i} )\right), \\
    c\fer_j &:= \Ad \exp\left( {\frac{\pi}{4}}\sqrt{-1}(-D_{j, q+j} - D_{q+j, j} )\right).
\end{align*}
The product
\begin{equation} \label{eqn:Cayley}
    c := \prod_i c\bos_i \prod_j c\fer_j 
\end{equation}
is thus a well-defined automorphism on $\gk$ as all terms commute. 
We set 
\begin{alignat*}{5}
    x_i &:= \sqrt{-1}(A_{i,p+i} - A_{p+i, i}), &&\quad x'_i &&:= A_{i,i}+A_{p+i, p+i},  &&\quad x_{\pm i} &&:= \frac{1}{2}(x'_i\pm x_i); \\
    y_j &:= \sqrt{-1}(D_{j, q+j} - D_{q+j,j}),
    && \quad  y'_j &&:= D_{j,j}+D_{q+j, q+j},
    && \quad  y_{\pm j} &&:= \frac{1}{2}(y'_j\pm y_j).
\end{alignat*}
Then by a direct (rank 1) computation, we see that under $c$:
\begin{equation} \label{eqn:rk1}
    A_{i,i} \mapsto x_{+i}, \quad A_{p+i, p+i}\mapsto x_{-i}, \quad D_{j, j} \mapsto y_{+j}, \quad D_{q+j, q+j} \mapsto y_{-j}.
\end{equation}
We define the following subspaces:
\begin{equation} \label{eqn:hCartan}
    \hk := c(\tk), \quad \ak := \Spn_\C \{x_i, y_j\}, \quad \tk_+ := \Spn_\C \{x'_i, y'_j\}. \quad (\text{Note }\hk = \ak\oplus \tk_+.)
\end{equation}
In $\tk$, the space $\tk_- := \Spn_\C \{A_{i,i}-A_{p+i, p+i}, D_{j,j} - D_{q+j, q+j}\}$ is the orthogonal complement of $\tk_+$ with respect to the Killing form on $\gk$ so we have $\tk = \tk_+ \oplus \tk_-$.
Also, on $\hk$ we let
\[
\epr_i, \dar_j, \tau_i\bos, \tau_j\fer \in \hk^*
\]
be dual to $x_i, y_j, x'_i, y'_j$ respectively. 
Then $\tau_k\bnf$ and $\alpha_k\bnf$ vanish on $\ak$ and $\tk_+$ respectively. 
We identify $\alpha_k\bnf$ with its restriction to $\ak$.
We also have
\begin{equation} \label{eqn:halfRel}
    \tau_i\bos = \frac{1}{2}(\epu_i^++\epu_i^-), \quad \tau_j\fer = \frac{1}{2}(\dau_j^++\dau_j^-)
\end{equation}
and we identify them with their restrictions on $\tk_+$.
On $\ak^*$, we set
\begin{equation}\label{eqn:akForm}
    (\alpha_m\bos, \alpha_n\bos)=-(\alpha_m\fer, \alpha_n\fer) = \delta_{mn}, \quad
    (\alpha_i\bos, \alpha_j\fer)=0,
\end{equation}
for $1\leq m, n \leq p$ or $q$.
For future purposes,  we also give a basis for $\hk^*$ defined by
\begin{equation} \label{eqn:chiEta}
    \chi_{\pm i} := \tau_i\bos \pm \alpha_i\bos, \quad 
    \eta_{\pm j} := \tau_j\fer \pm \alpha_j\fer.
\end{equation}

\subsection{Shimura operators} \label{subsec:SOp}
Recall that we set $i = 1, \dots, p$ and $j = 1, \dots, q$, and that the irreducible $\gl(p|q)$-module $L(\lambda^\natural)$ has highest weight 
\[
\sum\lambda_i \epu_i+\sum\langle \lambda'_j-p\rangle \dau_j
\]
with respect to $\bk^{\std}$, and is of Type \texttt{M} (\cite{CWBook}). In this case, Schur's Lemma says $\dim \End_{\gl(p|q)}\left(L(\lambda^\natural)\right) = 1$, and $L(\lambda^\natural) \otimes L(\lambda^\natural)$ is irreducible as a $\kk$-module for $\kk = \gl(p|q)\oplus \gl(p|q)$. 
If we let $\gl(p|q)$ act on the second component contragrediently (via negative supertranspose), then we define the irreducible $\kk$-module $L(\lambda^\natural) \otimes L^*(\lambda^\natural)$ as $\Wlambda$. 
Note both $\pminus$ and $\pplus$ are $\kk$-modules by the $(-1, 0, 1)$-grading.
We identify $\pminus$ as $(\pplus)^*$ via the supertrace form, which is non-degenerate. 

\begin{proposition} \label{prop:goodCW}
The symmetric superalgebras $\mathfrak{S}(\pplus)$ and $\mathfrak{S}(\pminus)$ are completely reducible and multiplicity free as $\kk$-modules. Specifically,
    \begin{equation} \label{eqn:goodCW}
    \mathfrak{S}^d(\mathfrak{p}^+) = \bigoplus_{\lambda\in \mathscr{H}^d(p, q)} \Wlambda,\quad \mathfrak{S}^d(\mathfrak{p}^-) = \bigoplus_{\lambda\in \mathscr{H}^d(p, q)} \Wlambda^*.
\end{equation}
\end{proposition}
\begin{proof}
    By duality it suffices to show the first equation. 
    First we have $\pplus\cong \C^{m|n}\otimes(\C^{m|n})^*$, by identifying $\C^{m|n}$ and $(\C^{m|n})^*$ as spaces of column and row vectors respectively. 
    The contragredient $\kk$-module structure on $(\C^{m|n})^*$ is obtained by applying the negative supertranspose on $\gl(p|q)$.
    Then Theorem~\ref{cite:CW} implies
    \[
    \Sk^d(\pplus) = \bigoplus_{\lambda\in \mathscr{H}^d(p, q)} L(\lambda^\natural)\otimes L^*(\lambda^\natural),
    \]
    proving the claim, c.f. \cite{SSS2020}*{Theorem~1.4} and the notation therein.
\end{proof}

The highest weight of $\Wlambda$ with respect to the Borel subalgebra $\bk^{\std}\oplus \bk^{\opp}$ of $\kk$ is given by:
\begin{align}
     \tk^* \ni \lambda^\natural_\tk &:= \sum \lambda_i  \epu^+_i + \sum\langle \lambda_j'-p \rangle \dau^+_j -\sum \lambda_i  \epu^-_i - \sum \langle \lambda_j'-p \rangle \dau^-_j \label{eqn:naturalWt} \\
     &= \sum \lambda_i\gamma_i\bos + \sum \langle \lambda_j'-p \rangle \gamma_j\fer, \quad \gamma_i\bos, \gamma_j\fer \in \HCso.\label{eqn:gammaWt}
\end{align}
The weight $\lambda_\tk^\natural$ is indeed dominant since we choose $\bk^{\opp}$ for the second copy of $\gl(p|q)$ in $\gl(p|q)\oplus \gl(p|q)$. Such a choice of positivity appears in \cite{SSS2020}, and also in \cite{FZ21} in a different context.
We also record the highest weight of $\Wlambda^*$ with respect to $\bk^{\opp}\oplus \bk^{\std}$:
\begin{equation}
    -\lambda^\natural_\tk
= \sum -\lambda_i\gamma_i\bos - \sum \langle \lambda_j'-p \rangle \gamma_j\fer.\label{eqn:-gammaWt}
\end{equation}

As $\pk^\pm$ are supercommutative, the respective universal enveloping algebras are just $\Sk(\pk^\pm)$.
Since $\pminus$ is identified with $(\pplus)^*$, 
the direct summand $\Wlambda^*\otimes \Wlambda$ is embedded in $\mathfrak{S}(\pminus) \otimes\mathfrak{S}(\pplus)$ and then multiplied into $\Uk$. 
We write $1_\lambda$ for the element in $\left(\Wlambda^*\otimes \Wlambda\right)^\kk$ corresponding to $\mathrm{Id}_{\Wlambda}\in \End_\kk(\Wlambda)$ under the natural isomorphism.
\begin{Definition} \label{defn:ShimuraOp}
For each $\lambda \in \mathscr{H}(p, q)$, we let $D_\lambda$ be the image corresponding to $1_\lambda$ under the composition of the multiplication and the embedding
\begin{align} \label{eqn:longArrow}
        \left(\Wlambda^*  \otimes \Wlambda \right)^\kk \hookrightarrow \left( \Sk(\pminus)\otimes \Sk(\pplus) \right)^\kk  \rightarrow  &\Uk^\kk \notag \\
        1_\lambda  \xmapsto{\hphantom{ 
        \left(\Wlambda^* \otimes \Wlambda \right)^\kk \hookrightarrow \left( \Sk(\pminus)\otimes \Sk(\pplus) \right)^\kk  \rightarrow  \Uk^\kk
        } }  
        & D_\lambda.
\end{align}
The element $D_\lambda$ is called the \textit{Shimura operator associated with the partition }$\lambda$.
\end{Definition}

\subsection{Restricted Root Data} \label{resRS}
In order to study the symmetry associated with a symmetric superpair, we need to know its restricted root system. We now give the restricted root data of $\gk = \gl(2p|2q)$ and $\kk =  \gl(p|q)\oplus \gl(p|q)$. 
At this point, it is desirable to define the \textit{deformed multiplicity} of a root in $\Sigma := \Sigma(\gk, \ak)$:
\begin{equation} \label{eqn:deformedMult}
\mult (\alpha) := -\frac{1}{2}m_\alpha =  -\frac{1}{2}\left( \dim (\gk_\alpha)\ev -\dim (\gk_\alpha)\od \right).
\end{equation} 
In the following table, we give the superdimensions of the restricted root spaces together with the corresponding $m(\alpha)$ denoted as the parameters $\mathsf{p, q, k, r, s}$.
Note only the last column gives the odd restricted roots and clearly $\Theta\od = \iso{\Theta\od}$.

\begin{center}
    \begin{tabu}{|X[0.8, c, m]|X[c, m]|X[c, m]|X[0.8, c, m]|X[c, m]|X[1.2, c, m]|X[c, m]|}
    \hline
     $\epr_i,\mathsf{ p}$ & $2\epr_i, \mathsf{q}$ & $\epr_i\pm\epr_j, \mathsf{k}$ & $\dar_i, \mathsf{r}$ & $2\dar_i, \mathsf{s}$  &  $\dar_i\pm\dar_j, \mathsf{k}^{-1}$ & $\epr_i\pm\dar_j, 1$  \\
    \hline
     $0|0, 0$ & $1|0, -\frac{1}{2}$ & $2|0, -1$ & $0|0, 0$ & $1|0, -\frac{1}{2}$ & $2|0, -1$ & $0|2, 1$\\
    \hline
    \end{tabu}
Table: Positive restricted roots and deformed multiplicities
\end{center}
The Weyl group $W_0$ associated with all the above cases is of \textit{Type} \textit{BC}; explicitly, $W_0$ is $(\mathscr{S}_p \ltimes (\mathbb{Z}/2\mathbb{Z})^p)\times (\mathscr{S}_q \ltimes (\mathbb{Z}/2\mathbb{Z})^q)$, the direct product of two usual Type $BC$ Weyl groups, permuting and alternating the signs of $\alpha\bos$ and $\alpha\fer$ separately,
c.f. Definition~\ref{defn:LambdaRing} and Definition~\ref{defn:noshiftBCSym}.
It is not hard to verify that all the above $\mathsf{k, p, q, r, s}$ satisfy the two relations
\[
\mathsf{p = kr}, \mathsf{2q+1 = k(2s+1)}
\]
as in \cite{SV09}. These parameters are used in defining \textit{deformed root systems} (see Appendix~\ref{app:GRS}).
We also record the Weyl vector positive restricted roots:
\begin{equation} \label{eqn:AiRho}
    \rho = \sum_{i=1}^p (2(p-i)+1-2q) \epr_i +\sum_{j=1}^q (2(q-j)+1)\dar_j
\end{equation}
and $\varrho := \frac{1}{2}\sum_{\alpha\in\Sigma} \mult(\alpha)\alpha$ is therefore $-\frac{1}{2}\rho$ by (\ref{eqn:deformedMult}) (c.f. (\ref{eqn:genRho})).

\subsection{Highest Weights and Cayley Transforms}  \label{subsec:Cayley}
To relate the two different choices of Cartan subalgebra, $\tk$ and $\hk$, we consider the Cayley transform $c$ (see (\ref{eqn:Cayley})) which allows us to send the positivity on one Cartan subalgebra to that of the other Cartan subalgebra. Specifically, we have $c^{-1} : \hk \rightarrow \tk$, and the dual map $c^{-1}_*: \tk^* \rightarrow \hk^*$. 
It is then a direct computation to check that 
\begin{equation}\label{eqn:Cayley*}
    c^{-1}_*: \epu^\pm_i \mapsto \chi_{\pm i}, \quad \dau^\pm_j \mapsto \eta_{\pm j}.
\end{equation}
First, we set the positive roots $\Sigma^+(\gk, \tk)$ according to the chain (c.f. Section~\ref{prelim}) 
\begin{equation}\label{eqn:chain}
    [\epu_1^+ \cdots \epu_p^+ | \dau_1^+ \cdots \dau_q^+ \dau_q^- \cdots \dau_1^- | \epu_p^-  \cdots \epu_1^-]
\end{equation}
which equivalently gives a Borel subalgebra of $\gk$, denoted as $\bk^+_\natural$. 
In fact, we have
\begin{equation} \label{eqn:bNatural}
    \bk_\natural^+ = \bk^{\std}\oplus \bk^{\opp}\oplus \pplus.
\end{equation}
By applying $c^{-1}_*$ to each of the terms in a chain, we can transfer the positivity corresponding to one Borel subalgebra to another. Thus, the $c^{-1}_*$ induced choice of positivity of $\Sigma(\gk, \hk)$ is given by
\begin{equation} \label{eqn:hkChain}
    [\chi_{+1} \cdots \chi_{+p} | \eta_{+1} \cdots \eta_{+q}\eta_{-q} \cdots \eta_{-1} | \chi_{-p} \cdots \chi_{-1}].
\end{equation}
We denote the corresponding Borel subalgebra as $\bk^+$.
Specifically, when restricted to $\ak^*$, $2\epr_i, 2\dar_j$, $\epr_i \pm \epr_{i'}$, $\dar_j \pm \dar_{j'}$, and $\epr_i \pm \dar_{j}$ are positive for $\Sigma^+ = \Sigma^+(\gk, \ak)$.
Similarly, the opposite choices are: 
\begin{equation}\label{eqn:b-natural}
    [\epu_1^- \cdots \epu_p^- | \dau_1^- \cdots \dau_q^- \dau_q^+ \cdots \dau_1^+ | \epu_p^+  \cdots \epu_1^+] \longleftrightarrow \bk_\natural^- = \bk^{\opp}\oplus \bk^{\std}\oplus \pminus.
\end{equation}
Then $c^{-1}_*$ induces the following chain and Borel subalgebra of $\gk$:
\begin{equation} \label{eqn:b-}
    [\chi_{-1} \cdots \chi_{-p} | \eta_{-1} \cdots \eta_{-q}\eta_{+q} \cdots \eta_{+1} | \chi_{+p} \cdots \chi_{+1}] \longleftrightarrow \bk^-.
\end{equation}
Now $-2\epr_i, -2\dar_j$, $-\epr_i \mp \epr_{i'}$, $-\dar_j \mp \dar_{j'}$, and $-\epr_i \mp \dar_{j}$ are positive in $\Sigma^- = \Sigma^-(\gk, \ak)$. We will use this choice for generalized Verma modules in Section~\ref{pairArg}.

In order to write down the Cayley transform on a weight module of $\gk$, let us look at a ``rank 1" computation in the same spirit of the discussion in Section~\ref{supPairs}. 
Define 
\[\epsilon^\pm: \begin{pmatrix}
x^+ & 0 \\
0 & x^-\\
\end{pmatrix}\mapsto x^\pm, \text{ and } \alpha: A\mapsto a \text{ for } A := \begin{pmatrix}
0 & ia \\
-ia & 0\\
\end{pmatrix}
\]
and set $\gamma:= \epsilon^+- \epsilon^-$. Let $\{X, H, Y\}$ be the standard $\sltwo$-triple (see Appendix \ref{app:11brackets}). Then $\gamma(H) = 2$.
We let $c_0 := {-\sqrt{-1}\frac{\pi}{4}} (X+Y)$, and
$C_0 := \exp{c_0}\in SL(2)$. 
Suppose $(\phi, \bigoplus_{n\in \mathbb{Z}}V_{n\gamma})$ is a finite dimensional weight module of $\sltwo$. Then there is an $SL(2)$ action $\Phi$ on $V$ so that $\phi = d\Phi$ and $\Phi \circ \exp = \exp\circ \phi$.
\begin{lemma} \label{eqn:CayleyWeight}
    Let the notation be as above. Let $v\in V_{n\gamma}$ and $w = \Phi(C_0) v$. Then $\phi(A)w = 2n\alpha(A)w$.
\end{lemma}
\begin{proof}
We compute the action of $A$ on $w$:
\begin{alignat*}{2}
    \phi(A)w &= \Phi(C_0)\Phi(C_0^{-1})\phi(A)w \\
    &= \Phi(C_0)\exp(\phi(-c_0))\phi(A)\exp(\phi(c_0)) v \quad  && [\Phi \circ \exp = \exp\circ \phi]\\
    &= \Phi(C_0)\phi(\Ad \exp(-c_0)A) v  \, && [\text{Adjoint}]\\
    &= \Phi(C_0)\phi\left( \begin{pmatrix}
    a & 0 \\
    0 & -a\\
    \end{pmatrix}\right) v. && [\text{Matrix exponential}]
\end{alignat*}
The last expression is just 
$2na \Phi(C_0) v= 2n\alpha(A)w$, i.e. $w$ has $A$-weight $2n\alpha$.
\end{proof}

\begin{proposition} \label{prop:even}
Let $V(\lambda_\tk, \bk_\natural^\pm)$ be a finite dimensional highest weight $\gk$-module of highest weight $\lambda_\tk = \sum n_i\bos\gamma_i\bos + \sum n_j\fer \gamma_j\fer$. 
Then $V(\lambda_\tk, \bk_\natural^\pm) = V(\lambda_\hk, \bk^\pm)$ where $\lambda_\hk = \sum 2n_i\bos\epr_i + \sum 2n_j\fer \dar_j$. In particular, $\lambda_\hk\vert_{\tk_+} = 0$.
\end{proposition}
\begin{proof}
Apply the above rank-1 reduction repeatedly to the highest weight space of $V(\lambda_\tk, \bk^\natural)$ by restricting the action of $\gk$ to the corresponding $\sltwo$-triple. 
Since (\ref{eqn:Cayley}) is well-defined, so is this process. 
Indeed, $n_k\bnf\gamma_k\bnf$ becomes $2n_k\bnf\alpha_k\bnf$ as a weight by Lemma~\ref{eqn:CayleyWeight}.
\end{proof}

\subsection{Image of \texorpdfstring{$\hcHomoGK$}{Gamma}} \label{subsec:imGamma}
Finally, we specialize $J(\ak)$ (see (\ref{eqn:Ja})), the image of $\hcHomoGK$, in Theorem~\ref{thm:prelHC} to $(\gl(2p|2q), \gl(p|q)\oplus\gl(p|q))$ and show that it consists of even supersymmetric polynomials.
Note by the root data above, $\Theta\od = \iso{\Theta\od} =
 \{ \pm \alpha_i\bos \pm \alpha_j\fer \}$.
For our $\alpha$, $J_\alpha$ affords an explicit description.
Let $\alpha \in \Theta\od$. Choose $h_0 \in \ak$ such that $\alpha( h_0) = 1, b \left(h_0, h_0 \right) = 0$. Let $t_\alpha \in \ak$ be unique such that $b(t_\alpha, a) = \alpha(a)$ for all $a\in \ak$. Note $b(h_0, t_\alpha)=1$.
Let $\mathfrak{a_\alpha}$ be $\Spn_\mathbb{C} \left \{ h_0, t_\alpha \right \}$, and $\mathfrak{a^{\perp}_\alpha }$ be the orthogonal complement of $\mathfrak{a_\alpha}$ in $\mathfrak{a}$. Then $\ak = \ak_\alpha \oplus\ak_\alpha^\perp$ since $b$ is non-degenerate when restricted to $\ak_\alpha$.
In \cite{A2012}*{Notation~3.7}, the subalgebra $J_\alpha$ is defined as $I_{\alpha, \mathfrak{m}_\alpha}\mathfrak{S\left ( a^{ \perp}_\alpha \right )}$ where $I_{\alpha, \mathfrak{m}_\alpha}$ is given precisely as $\mathbb{C}\left [h_0^k t_\alpha^l  \right ]$ for $l\geq \min\left \{ k, q\od \right \}$ in the proof of \cite{A2012}*{Lemma~3.11}.
Following \cite{A2012}, we set
\begin{equation}\label{eqn:Jalpha}
J_\alpha := \mathbb{C} [h_0^k t_\alpha^l  ] \mathfrak{S ( a^{ \perp}_\alpha )}, \textup{ with } k\in \mathbb{N}, l\geq \min\left \{ k, q\od \right \}.
\end{equation}
Here $q\od := \frac{1}{2}\dim (\gk_\alpha)\od$ and is 1 by the root data above.
Then $l\geq \min\{k, 1\}$ implies $J_\alpha =\left( \mathbb{C}\oplus t_\alpha\mathfrak{S}(h_0, t_\alpha)\right)\mathfrak{S}(\ak_\alpha^\perp) = \mathfrak{S}(\ak_\alpha^\perp) \oplus t_\alpha\mathfrak{S}(\ak)$. 
By definition of $t_\alpha$, we have $\mathbb{C}t_\alpha^\perp = \ker \alpha$ and $\mathbb{C}t_\alpha^\perp \supseteq \ak_\alpha^\perp$.
Therefore, we obtain the following $h_0$-independent formulation
\begin{equation} \label{eqn:JalphaNoH0}
J_\alpha = \mathfrak{S}(\ker\alpha)+t_\alpha\mathfrak{S}(\ak).
\end{equation}

Set $i=1, \dots, p$ and $j = 1, \dots, q$. In Definition~\ref{defn:noshiftBCSym}, we set $V = \ak^*$, $e_i := \alpha\bos_i$, $d_j := \alpha\fer_j$ as in $\Sigma := \Sigma(\gk, \ak)$. For $\Sigma$, the Weyl group $W_0$ is of Type \textit{BC}, from which follows Condition (i). 
Using (\ref{eqn:akForm}) and the symmetry in Condition (i), Condition (ii) can be written as
\begin{enumerate}
    \item[(ii')] $f(X+\alpha)=f(X)$ if $(X, \alpha) = 0$ for $\alpha = \pm\alpha_i\bos \pm \alpha_j\fer \in \iso{\Sigma}$.
\end{enumerate}

We identify the symmetric algebra $\Sk(\ak)$ with the polynomial algebra $\Pk(\ak^*)$ so that $\prod a_i \in \mathfrak{S}(\ak)$ corresponds to $p\in \mathfrak{P}(\ak^*)$:
\begin{equation} \label{eqn:identification}
    \mu \mapsto p(\mu) := \prod \mu(a_i), \; \text{for any $\mu\in\ak^*$.}
\end{equation}
We set $\tau_\alpha$ by $\tau_\alpha(p)(X) = p(X+\alpha)$ for any function $p$ defined on the hyperplane 
\[
\Hyp_\alpha = \left \{ \mu \in \ak^*: \left ( \alpha , \mu \right ) = 0 \right \} \subseteq \ak^*.
\]
Then Condition (ii') can be rephrased as
\[
f \vert_{\Hyp_\alpha}= \tau_\alpha(f\vert_{\Hyp_\alpha}), \text{ for } \alpha  \in \iso{\Sigma}.
\]

\begin{lemma} \label{lem:transInv}
    Let $V$ be a vector space with basis $\{e_i\}_{i=1}^N$ and standard coordinates $\{x_i\}_{i=1}^N$. If $g \in\Pk(V) = \C[x_i]_{i=1}^n$ satisfies $g(X+e_1)=g(X)$, then $g\in \C[x_2, \dots, x_N]$. That is, $g\in \mathfrak{S}(\ker E_1)$ where $E_1:e_i\mapsto \delta_{1i}$ is the evaluation at $e_1$.
\end{lemma}
\begin{proof}
Suppose on the contrary $g= \sum_{i=1}^d c_i x_1^{k_i}m_i$ for $k_i \geq 1$, $c_i\neq 0$, and monomials $m_i\in \C[x_2, \dots, x_N]$. Then $g(x_1+1,\dots, x_N)-g(x_1, \dots, x_N)=0$ if and only if
\[
\sum p_k[(x_1+1)^{k}-x_1^{k}]= 0
\]
where $p_k = \sum c_{ik}m_{ik}\in \C[x_2, \dots, x_N]$ with distinct $m_{ik}$ for each $k\geq 1$. Since distinct $kx_1^{k-1}$ are linearly independent over the ring $\C[x_2, \dots, x_N]$, so are $(x_1+1)^{k}-x_1^{k}$ as their leading terms are $kx_1^{k-1}$. Hence all $p_k$ must be 0, and we have $c_i = 0$ for all $i$.
\end{proof}

The following proposition is also independently proved in \cite{SZ23}.

\begin{proposition} \label{prop:imGammaSupSym}
Let $\Sigma = \Sigma(\gk, \ak)$ and $\hcHomoGK: \mathfrak{U}^\kk(\gk) \rightarrow \mathfrak{S}(\ak)\cong \mathfrak{P}(\ak^*)$ be as above. We have $\Ima \hcHomoGK = \RingEv(\ak^*)$.
\end{proposition}
\begin{proof}
We check Conditions (i, ii') for $f\in J(\ak)$ (see (\ref{eqn:Ja}) and (\ref{eqn:JalphaNoH0})).
Condition (i) is equivalent to $f\in \mathfrak{S\left(a\right)}^{W_0}$ as $W_0$ is the Weyl group of Type $BC$.
As for Condition (ii'), it suffices to show
\begin{equation} \label{eqn:tauAlphaEqn}
    f \in J_\alpha = \mathfrak{S}(\ker\alpha)+t_\alpha\mathfrak{S}(\ak) \iff f\vert_{\Hyp_\alpha} \left(\mu\right) = f\vert_{\Hyp_\alpha}  \left(\mu + \alpha\right) ,\quad  \mu\in \Hyp_\alpha \subseteq \ak^*.
\end{equation}

Since the pairing between $\Hyp_\alpha$ and $\mathbb{C}t_\alpha$ is 0, it descends to a non-degenerate pairing between $\Hyp_\alpha$ and $\ak/\mathbb{C}t_\alpha$. Hence we can identify $\mathfrak{P}(\Hyp_\alpha)$ with $\mathfrak{S}(\ak/\mathbb{C}t_\alpha)$.
Since $t_\alpha \in \ker \alpha$, $\alpha$ descends to $\Tilde{\alpha}:\ak/\mathbb{C}t_\alpha \rightarrow \mathbb{C}$ defined by $\Tilde{\alpha}(A+\mathbb{C}t_\alpha) := \alpha(A)$.
Let $g \in \mathfrak{S}(\ak/\mathbb{C}t_\alpha) \cong \mathfrak{P}(\Hyp_\alpha)$. Suppose $g(\mu + \alpha) = g(\mu)$ for $\mu\in \Hyp_\alpha$. Then $g \in \mathfrak{S}(\ker \Tilde{\alpha})$ by Lemma~\ref{lem:transInv}. Therefore, 
\[
f\vert_{\Hyp_\alpha} \left(\mu\right) = f\vert_{\Hyp_\alpha}  \left(\mu + \alpha\right)  \iff f\vert_{\Hyp_\alpha} \in \mathfrak{S}(\ker \Tilde{\alpha}).
\]

Let $V\supseteq W$ be two finite dimensional vector spaces. Then the standard projection $V \rightarrow V/W$ induces a projection $\mathfrak{S}(V) \rightarrow \mathfrak{S}(V/W)$ and the exact sequence
\[
0 \rightarrow W\mathfrak{S}(V) \rightarrow \mathfrak{S}(V) \rightarrow \mathfrak{S}(V/W) \rightarrow 0.
\]
Now take $V = \ak$ and $W = \mathbb{C}t_\alpha$. Clearly $\ker \Tilde{\alpha} = \ker\alpha /\mathbb{C}t_\alpha$.
We see that $f\vert_{\Hyp_\alpha} \in \mathfrak{S}(\ker \Tilde{\alpha})$ must correspond to $g+h$ with $g\in \mathfrak{S}(\ker\alpha) \subseteq \mathfrak{S}(\ak)$ and $h\in t_\alpha\mathfrak{S}(\ak)$. That is,
\[
f\vert_{\Hyp_\alpha} \in \mathfrak{S}(\ker \Tilde{\alpha}) \iff f\in J_\alpha,
\]
proving (\ref{eqn:tauAlphaEqn}).
\end{proof}

\section{Spherical Representations}\label{sphRep}
We study finite dimensional irreducible highest weight modules of $\gk = \gl(2p|2q)$ which are $\kk$-spherical for $\kk=\gl(p|q)\oplus \gl(p|q)$. In this section, we discuss generalities, specialize the criteria in \cite{a2015spherical}, and prove Theorem~\ref{main:11Sph} in the end.

\subsection{Generalities}
We first present certain general statements regarding spherical representations.
Let $\Gk$ be a Lie (super)algebra admitting the Iwasawa decomposition $\Gk = \Kk\oplus \Ak \oplus \Nk$. Here $\Kk$ is the subalgebra fixed by the Cartan involution $\theta$, $\Ak \subseteq \Pk\ev$ is maximal toral where $\Pk$ is the $(-1)$-eigenspace of $\theta$, and $\Nk$ is a nilpotent subalgebra. 
Recall that for a $\Gk$-module $V$, if $V^\Kk := \{v\in V: X.v =0 \textup{ for all }X\in \Kk \} \neq \{0\}$, then $V$ is called ($\Kk$-)spherical. 
We denote $\Uk(\Gk)$ as $\Uk$, the $\Kk$-centralizer of $\Uk$ as $\Uk^\Kk$, and the set of weights of a module $V$ as $\supp{V}$.

\begin{proposition}\label{prop:uniSph}
If $V$ is a finite dimensional and irreducible $\Gk$-module, then $\dim V^{\Kk} \leq 1$. 
\end{proposition}

\begin{proof}
    Let $\Bk = \Hk \oplus \Nk$ be a Borel subalgebra of $\Gk$ where $\Hk \supseteq \Ak$ is a $\theta$-stable Cartan subalgebra. Denote the $\Bk$-highest weight vector of $V$ by $v$. Suppose on the contrary that there are two linearly independent spherical $v_1$ and $v_2$ in $V^\Kk$. Then we may find a non-zero linear combination $w = c_1 v_1 + c_2 v_2$ when expressed as a combination of weight vectors, the coefficient of $v$ is 0. Then the submodule
    \[
    \Uk(\gk)w = \Uk(\Nk^-)\Uk(\Ak)\Uk(\Kk)w
    \]
    does not contain $v$ and is thus proper, contradicting the irreducibility of $V$.
\end{proof}

\begin{lemma}\label{lem:scalar}
If $V$ is a $\Gk$-module with $\dim V^\Kk = 1$, then $u\in \Uk^\Kk$ acts by a scalar on $V^\Kk$.
\end{lemma}
\begin{proof}
If $v\in V^\Kk$, then $X.(u.v) = (-1)^{|u||X|} u.(X.v) = 0$ for any homogeneous $X\in \Kk$ and $u\in \Uk^\Kk$. 
\end{proof}

A $\Gk$-module is said to be \textit{cospherical} if $V^*$ is spherical. Equivalently, there exists a non-zero functional on $V$ such that $\Kk$ acts by 0 contragrediently. We assume that $\Hk$ is an even Cartan subalgebra containing $\Ak$, and is stable under $\theta$ which defines the superpair $(\Gk, \Kk)$. Let $\Tk = \Hk \cap  \Kk$. With slight abuse of notation, $\theta$ also denotes the involution on $\Hk^*$ induced from $\theta$.

Let $U_\lambda$ denote a finite dimensional irreducible $\Gk$-module of highest weight $\lambda \in \Hk^*$ with respect to a Borel subalgebra $\Bk$ containing $\Hk$ and $\Nk$. Let $v \in U_\lambda$ be a non-zero highest weight vector.

\begin{proposition} \label{prop:non0pair}
    Suppose $U_\lambda$ is cospherical and $\kappa \in (U_\lambda^*)^\Kk$ is non-zero. Then the canonical pairing $\langle v, \kappa \rangle$ is non-zero.
\end{proposition}
\begin{proof}
We have $\Gk = \Kk + \Bk$ from the Iwasawa decomposition.
We prove the claim by contradiction. Suppose $\langle v, \kappa \rangle = 0$. Consider $\langle u.v, \kappa \rangle$ for any $u\in \Uk$. By the Poincar\'e--Birkhoff--Witt theorem, we write $u =  u_K u_B $ with $u_K \in \Uk(\Kk), u_B \in \Uk(\Bk)$. Note that $\Uk(\Bk)$ acts on $v$ via a character and so does $\Uk(\Kk)$ on $\kappa$.
Then
\begin{equation*}
    \langle u.v, \kappa \rangle = \langle u_K u_B .v, \kappa \rangle
    = c\langle u_K .v, \kappa \rangle 
\end{equation*}
where $c$ is a scalar determined by $u_B$. Notice $u_K \in \Uk(\Kk) = \C\oplus \Kk \Uk(\Kk)$.
For any $u\in\Uk(\Kk)$ and $K\in \Kk$, we have $\langle Ku.v, \kappa \rangle =  0$. Therefore, we get $\langle u.v, \kappa \rangle = cd \langle v, \kappa \rangle$ for another scalar $d$ determined by $u_B$.
Thus, $\kappa$ vanishes on $\Uk.v = U_\lambda$ which implies $\kappa = 0$, a contradiction.
\end{proof}

\begin{lemma} \label{lem:thetaMu}
    Suppose $U_\lambda$ is cospherical. Then $\lambda|_\Tk = 0$. That is, $\theta \lambda = -\lambda$.
\end{lemma}
\begin{proof}
    Let $\kappa$ be cospherical on $U_\lambda$ and $v\in U_\lambda$ be highest. Let $t \in \Tk$. Then $\langle t.v, \kappa\rangle  = \lambda(t)\langle v, \kappa\rangle$. On the other hand, $\langle t.v, \kappa\rangle = -\langle v, t.\kappa\rangle = 0$ since $\Tk \subseteq \Kk$. By Proposition~\ref{prop:non0pair}, $\lambda(t) = 0$.
\end{proof}

\begin{proposition} \label{prop:Sph=coSph}
    Suppose $U_\lambda$ is cospherical, then $U_\lambda$ is also spherical. Thus sphericity and cosphericity are equivalent for $U_\lambda$.
\end{proposition}
\begin{proof}
    Let $U_\lambda^\theta$ be the $\Gk$-module with the same vector (super)space as $U_\lambda$ but the representation map be pre-composed by $\theta$. 
    Then \textit{as $\Kk$-modules}, $U_\lambda^\theta \cong U_\lambda$ since $\theta$ fixes $\Kk$. Thus $(U_\lambda^\theta)^\Kk \cong U_\lambda^\Kk$.
    We also see that $\supp{U_\lambda^*} = \{-\mu: \mu \in \supp{U_\lambda}\}$. By Lemma~\ref{lem:thetaMu}, we get $\theta \lambda = -\lambda$, and this implies $U_\lambda^* \cong U_\lambda^\theta$. Then as $(U_\lambda^*)^\Kk$ is non-zero, $(U_\lambda^\theta)^\Kk \cong U_\lambda^\Kk$ is also non-zero. 
    We refer to \cite{ShermanThesis}*{Proposition~6.4.2} for a similar construction and argument. 
\end{proof}

\subsection{Finite Dimensionality} \label{subsec:fD}
Recall in the Cheng--Wang decomposition (Proposition~\ref{prop:goodCW}), each component $W_\lambda^*$ has the following highest weight with respect to the Borel subalgebra $\bk^{\opp}\oplus \bk^{\std}$
\begin{equation} \label{eqn:CWweights}
-\sum_{i = 1}^p \lambda_i \left(\epu_i^+-\epu_i^-\right)-\sum_{j = 1}^q \langle \lambda_j'-p \rangle (\dau_j^+-\dau_j^-)= -\sum_{i = 1}^p \lambda_i \gamma_i\bos - \sum_{j = 1}^q \langle \lambda_j'-p \rangle  \gamma_j\fer \in \tk^*.
\end{equation}
Let us denote the above weight as $-\lambda^\natural$ by a slight abuse of notation. 
Recall $\bk_\natural^- = \bk^{\opp}\oplus \bk^{\std}\oplus \pminus$ from (\ref{eqn:b-natural}).
We are interested in $\gk$-modules $V_\lambda := V(-\lambda^\natural, \bk_\natural^-)$ and study which of them are $\kk$-spherical. 
We show that $V_\lambda$ are always finite dimensional using the results in Section~\ref{prelim}. 


We first introduce some notation. For $\gl(2p|2q)$, any highest weight in the form of
\[
\sum x_i^\pm\epu_i^\pm + \sum y_j^\pm \dau_j^\pm.
\]
are specified by (1) the coefficients $x_i^\pm, y_j^\pm$, and (2) the order of $\epu^\pm$ and $\dau^\pm$ when describing the Borel subalgebra. We write the above weight as
\[
(\dots, \overset{\bullet}{x_i^\pm}, \dots, \overset{\times}{y_j^\pm}, \dots)
\]
where the coefficients are in the order of the $\epu\dau$-chain and we put a dot $\bullet$ (respectively a cross $\times$) over a coefficient of an $\epu_i^\pm$ (respectively $\dau_j^\pm$). Odd reflections only swap a $\bullet$ with a $\times$.
As Borel subalgebras are conjugate under the usual (even) reflections, swapping a $\bullet$ with a $\bullet$ or a $\times$ with a $\times$ will not change the order of these coefficients.

In this notation, if we take $\alpha = \dau^\pm_j-\epu^\pm_i$ in Theorem~\ref{thm:oddRefWghts}, we then have the following rule.
Let $\bk^{(1)}$ and $\bk^{(2)}$ be adjacent. If the $\bk^{(1)}$-highest weight $\lambda^{(1)}$ is given by
$(\cdots, \overset{\times}{x} | \overset{\bullet}{y}, \cdots)$ where $x, y$ are the entries where the odd reflection is applied, then the $\bk^{(2)}$-highest weight $\lambda^{(2)}$ is given by
\begin{enumerate}
    \item $(\cdots, \overset{\bullet}{y} |  \overset{\times}{x}, \cdots)$ if $x=-y$ (i.e. $(\lambda, \alpha) = 0$), or
    \item $(\cdots, \overset{\bullet}{y+1} | \overset{\times}{x-1}, \cdots)$ if $x\neq -y$ (i.e. $(\lambda, \alpha) \neq 0$).
\end{enumerate}

By Theorem~\ref{thm:oddRefWghts}, an odd reflection applied on a highest weight produce another highest weight with respect to the new Borel subalgebra. Therefore, we have the following lemma.

\begin{lemma} \label{lem:oddRefDom}
Let $(\overset{\times}{x}, \overset{\times}{y} |\overset{\bullet}{z})$ be with $x\geq y$.
If $(\overset{\bullet}{u} | \overset{\times}{v}, \overset{\times}{w})$ is the result of applying the two odd reflections that switch $z$ with $y$ and then $x$, then $v \geq w$.
\end{lemma}

\begin{theorem}\label{thm:CWWghtsFD}
For any $\lambda\in \mathscr{H}(p, q)$, $V_\lambda$ is finite dimensional. 
\end{theorem}
\begin{proof}
In view of Lemma~\ref{lem:BorelSeq}, we may find certain Borel subalgebras (via odd reflections) so that any \textit{even} simple root is simple in some Borel subalgebra. 
If each corresponding $\lambda^{\alpha}$ is dominant as in Theorem~\ref{thm:fDBorelWght}, then we have the desired result.

Recall the Borel subalgebra $\bk_\natural^-$ of $\gk$ corresponds to the following chain (\ref{eqn:b-natural})
\begin{equation*}
    [\epu_1^- \cdots \epu_p^- | \dau_1^- \cdots \dau_q^- \dau_q^+ \cdots \dau_1^+ | \epu_p^+  \cdots \epu_1^+],
\end{equation*}
and all the simple roots of $(\bk_\natural^-)\ev$ are $\epu_1^- -\epu_2^-, \dots, \epu_p^- -\epu_p^+, \epu_p^+ -\epu_{p-1}^+, \dots, \epu_2^+ -\epu_1^+$ and $\dau_1^- -\dau_2^-, \dots, \dau_q^- -\dau_q^+, \dau_q^+ -\dau_{q-1}^+, \dots, \dau_2^+ -\dau_1^+$. In the above chain, the only missing even simple root is $\epu_p^- -\epu_p^+$, which is present in the following chain: 
\begin{equation}\label{eqn:secondChain}
    [\epu_1^- \cdots \epu_p^- \epu_p^+ |\dau_1^- \cdots  \dau_1^+ | \epu_{p-1}^+  \cdots \epu_1^+].
\end{equation}
Let us denote the Borel subalgebra and the highest weight with respect to (\ref{eqn:secondChain}) as $\bk^\sharp$ and $\lambda^\sharp$ respectively. 
We may choose the form $(-, -)$ on $\tk^*$ to be induced from the supertrace form on $\tk$. Then the $\bk$-dominance condition in Section~\ref{prelim} just means the integral coefficients of adjacent $\epsilon^\pm$ (respectively $\dau^\pm$) are non-increasing. Note there are no odd and non-isotropic roots in this case. 

Clearly, the original $-\lambda^\natural$ is $\bk_\natural^-$-dominant. The question is now whether or not $\lambda^\sharp$ is $\bk^\sharp$-dominant. By Theorem~\ref{thm:oddRefWghts}, we can obtain $\lambda^\sharp$ by a sequence of odd reflections performed on:
\[
-\lambda^\natural = (\overset{\bullet}{\lambda_1}, \dots, \overset{\bullet}{\lambda_p} | \overset{\times}{\nu_1}, \dots \overset{\times}{\nu_q}, \overset{\times}{-\nu_q}, \dots, \overset{\times}{-\nu_1} | \overset{\bullet}{-\lambda_p}, \dots,\overset{\bullet}{-\lambda_1})
\]
where $\nu_j = \langle \lambda'_j-p  \rangle$ for $\lambda \in \mathscr{H}(p, q)$. Denote the number of strictly positive entries in $(\nu_1, \dots, \nu_q)$ as $l$. Then $\lambda_p \geq l$ as $\lambda_p \geq q \geq \lambda_{p+1} = l$.

Denote $\lambda^\sharp = (\overset{\bullet}{\lambda_1}, \dots, \overset{\bullet}{\lambda_p}, \overset{\bullet}{\tau_1} | \overset{\times}{\sigma_1}, \dots, \overset{\times}{\sigma_{2q}} | \overset{\bullet}{-\lambda_{p-1}}, \dots, \overset{\bullet}{-\lambda_1})$. By Lemma~\ref{lem:oddRefDom}, we know that $\sigma_i \geq \sigma_j$ for any $i>j$ by induction. Therefore, we only need to prove $\lambda_p \geq \tau_1$ for $\bk^\sharp$-dominance. 

\textit{Case (i)} Suppose $\lambda_p \geq q$, then the sequence of $2q$ odd reflections performed on $-\overset{\bullet}{\lambda_p}$ adds \textit{at most} $2q$ to $-\lambda_p$ and we have $\tau_1 \leq -\lambda_p+2q \leq q \leq \lambda_p$.

\textit{Case (ii)} Otherwise $q>\lambda_p \geq l$ and 0 occurs in $(\dots, | \overset{\times}{\nu_1}, \dots, \overset{\times}{-\nu_1}, | \dots)$, and
\[
-\lambda^\natural = (\dots,\overset{\bullet}{\lambda_p} | \overset{\times}{\nu_1}, \dots \overset{\times}{\nu_l}, \underbrace{\overset{\times}{0}, \dots, \overset{\times}{0}}_{q-l},\underbrace{\overset{\times}{0}, \dots, \overset{\times}{0}}_{q-l}, \overset{\times}{-\nu_l}, \dots, \overset{\times}{-\nu_1} | -\overset{\bullet}{\lambda_p}, \dots ).
\]
We claim that $\tau_1 = l$. To get ${\tau_1}$, we first push $-\overset{\bullet}{\lambda_p}$ through $\overset{\times}{-\nu_l}, \dots, \overset{\times}{-\nu_1}$. Since all of these numbers are negative, each of the $l$ odd reflections adds 1 to $\overset{\bullet}{-\lambda_p}$, after which we get
\[
(\dots, \overset{\bullet}{\lambda_p}| \overset{\times}{\nu_1}, \dots \overset{\times}{\nu_l}, \underbrace{\overset{\times}{0}, \dots, \overset{\times}{0}}_{q-l},\underbrace{\overset{\times}{0}, \dots, \overset{\times}{0}}_{q-l} | \overset{\bullet}{-\lambda_p +l} | \overset{\times}{-\nu_l-1},\dots, \overset{\times}{-\nu_1-1} | \overset{\bullet}{-\lambda_{p-1}}, \dots).
\]
Notice $-\lambda_p+l\leq 0$ by assumption. 
Now as we keep applying the odd reflections, $\overset{\bullet}{-\lambda_p +l}$ will encounter the $2(q-l)$ zeros. 
But $q-l > \lambda_p -l$ by the assumption. This means there are more than enough zeros to make $-\lambda_p+l$ zero, since each time we have $(\dots, \overset{\times}{0} | \overset{\bullet}{x}, \dots)$ with $x<0$, the odd reflection adds 1 to $x$. 
If $x$ becomes 0, then the odd reflections just flip $\bullet$ with $\times$ after which we have the following:
\[
(\dots, \overset{\times}{\nu_1}, \dots \overset{\times}{\nu_l} | \overset{\bullet}{0} | \overset{\times}{0}, \dots, \overset{\times}{-1}, \overset{\times}{-\nu_l-1},\dots).
\]
Finally, we apply $l$ more odd reflections to push $\overset{\bullet}{0}$ through the $\times$ sequence to the left. Each reflection adds 1 to it. This proves $\tau_1=l \leq \lambda_p$.

Therefore, we always have $\lambda_p \geq \tau_1$, and $\lambda^\sharp$ is $\bk^\sharp$-dominant. 
\end{proof}

\subsection{The High Enough Constraints}\label{subsec:HEC}
We now specialize what the high enough condition (see Section~\ref{prelim}) means in our case. 
By Proposition~\ref{prop:even}, the highest weight of $V_\lambda$ on $\hk$ with respect to $\bk^-$ (see (\ref{eqn:b-})) is precisely
\begin{equation}\label{eqn:CWweightsOnA}
    -2\lambda_\hk^\natural := -\sum_{i = 1}^p 2\lambda_i \alpha_i\bos - \sum_{j = 1}^q 2\langle \lambda_j'-p \rangle  \alpha_j\fer \in \hk^*
\end{equation}
which vanishes on $\tk_+$. By slight abuse of notation, we denote its restriction on $\ak$ as
\[
-2{\lambda^\natural} := -2\lambda_\hk^\natural |_\ak .
\]

\begin{proposition} \label{prop:AlldridgePart}
Let $\lambda \in \mathscr{H}(p, q)$ and $V_\lambda$ be as above. If $\lambda_p > \langle \lambda_1'-p \rangle$,
then $V_\lambda$ is spherical.
\end{proposition}
\begin{proof}
Let us apply the sphericity condition given in Theorem~\ref{thm:AlldridgeSph} which prescribes the following:
\begin{enumerate}
    \item $\frac{ ( -2{\lambda^\natural}, \alpha  )}{\left ( \alpha, \alpha \right )} \in \mathbb{N}$ where $\alpha \in \Sigma^-\ev(\gk, \ak)$ (note that we picked $\bk^-$ here),
    \item $ ( -2{\lambda^\natural}, \beta  )>0$ for any isotropic $\beta \in \Sigma^-(\gk, \ak)$.
\end{enumerate}

Note that $-2\lambda_\hk^\natural|_{\tk_+} = 0$ as $\tk_+ = \hk \cap \kk$. The root data in Section~\ref{supPairs} shows:
$$\Sigma^-(\gk, \ak) = \left \{ -\alpha_j\bos-\alpha_k\bos, -\alpha_j\fer-\alpha_k\fer\right \}\sqcup \left \{-\alpha_j\bos+\alpha_k\bos, -\alpha_j\fer+\alpha_k\fer: j<k\right \} \sqcup \left\{-\alpha_j\bos\mp \alpha_k\fer \right \}.$$

Take $\alpha = -2\alpha_j\bos, -2\alpha_j\fer, -\alpha_j\bnf\mp \alpha_k\bnf$ respectively. Then Condition (1) gives
\begin{align*}
    \frac{ ( -2{\lambda^\natural},- 2\alpha_j\bos  )}{4} &= \lambda_j \geq 0, \; &&\frac{ ( -2{\lambda^\natural}, -\alpha_j\bos \mp \alpha_k\bos  )}{2} = \lambda_j\pm \lambda_k \geq 0, \\
    \frac{ ( -2{\lambda^\natural}, -2\alpha_j\fer  )}{-4} &= \langle \lambda_j'-p \rangle \geq 0 , \; &&\frac{ (-2{\lambda^\natural}, -\alpha_j\fer \mp \alpha_k\fer  )}{-2} =  \langle \lambda_j'-p \rangle \pm \langle \lambda_k'-p \rangle\geq 0,
\end{align*}
by (\ref{eqn:akForm}) and (\ref{eqn:CWweightsOnA}), and all of them are vacuously true for $\lambda \in \mathscr{H}(p, q)$.

As for Condition (2), we write
\[
 ( -2{\lambda^\natural}, -\alpha_j\bos\mp \alpha_k\fer  ) = 2\lambda_j \mp 2 \langle \lambda_k'-p  \rangle >0
\]
for any $j, k$ which implies $\lambda_1 \geq \cdots \geq \lambda_p > \langle \lambda_1'-p \rangle \geq \cdots \geq  \langle \lambda_q'-p  \rangle \geq 0$.
\end{proof}

Note this is only a \textit{sufficient} condition for sphericity.
In particular, we see that \textit{such a partition must have a $p$-th part}. Clearly, the trivial representation, albeit spherical, is not high enough. We would like to point out that the above condition is insufficient to show the vanishing properties for proving Theorem~\ref{main:RESULT}. See Remark \ref{rmk:whyInsufficient} for an example.
This means a new approach is needed.  

\subsection{Sphericity and Quasi-sphericity} \label{subsec:sphQuasi}
The rest of this section is dedicated to the study of sphericity of $V_\lambda$ when $p=q=1$ (Theorem~\ref{main:11Sph}) so we set $\gk = \gl(2|2)$ and $\kk = \gl(1|1)\oplus \gl(1|1)$. Let us assume that $\lambda \neq \varnothing$. Denote the standard characters of the diagonal Cartan subalgebra $\tk$ as $\epu^+, \epu^-, \dau^+, \dau^-$ from top left to bottom right. 
We use the following matrix to show our choice of coordinates:
\[
\begin{pmatrix}
\ast & X_1 & \eta_{11} & \eta_{12}\\ 
Y_1 & \ast & \eta_{21} & \eta_{22}\\ 
\xi_{11} & \xi_{12}  & \ast & X_2\\ 
\xi_{21}  & \xi_{22}  & Y_2 & \ast
\end{pmatrix}.
\]
Thus, a letter appearing above represents the matrix with 1 in the corresponding entry and 0 elsewhere. 
We abbreviate $\diag(a, b, c, d)$ as $\dangle{abcd}$. Note $\kk$ is spanned by the diagonal Cartan subalgebra together with $\eta_{ii}, \xi_{ii}$ for $i=1, 2$.
In what follows, $\gk_1$ (respectively $\gk_{-1}$) is the subalgebra spanned by $\eta_{ij}$ (respectively $\gk_{-1}$), and $\gk = \gk_{-1}\oplus \gk\ev \oplus \gk_1$.

For a non-empty (1, 1)-hook partition $(a, 1^b)$, the weight of $V_\lambda=V(-\lambda^\natural, \bk_\natural^-)$ we investigate is 
\[
-\lambda^\natural = (a | b,-b |-a) = a(\epu^- -\epu^+)+b(\dau^- -\dau^+)
\]
with respect to $\bk_\natural^-$, or the chain $[\epu^- |\dau^- \dau^+ | \epu^+]$.
We apply two odd reflections (swapping $\dau^+$ with $\epu^+$ and then $\dau^-$ with $\epu^+$) to get $[\epu^-  \epu^+|\dau^- \dau^+]$ which is conjugate to the standard Borel subalgebra $\bk^{\std}$ (corresponding to $[\epu^+  \epu^-|\dau^+ \dau^-]$). By computing the odd reflections, we get the new highest weight
\begin{align} \label{eqn:2cases}
\Breve{\lambda} &=\begin{cases}
(a, -a+2 | b-1, -b-1)  & \text{ if } b\neq a-1 \\
(a, -b | b, -a) & \text{ if } b = a-1 
\end{cases} \\
&=\begin{cases}
a\epu^++(-a+2)\epu^-+(b-1)\dau^++(-b-1)\dau^- & \text{ if } b\neq a-1 \\
a\epu^++(-b)\epu^-+b\dau^++(-a)\dau^- & \text{ if } b = a-1 
\end{cases} .
\end{align}
Thus $V_\lambda = V(\Breve{\lambda}, \bk^{\std})$. 
The advantage of using $\bk^{\std}$ (which is \textit{distinguished}) is that it allows us to construct the finite dimensional \textit{Kac module} $K(\Breve{\lambda} )$ using an irreducible highest weight $\gk\ev$-module $\Breve{W}(\Breve{\lambda})$ (here $\gk\ev = \gl(2)\oplus\gl(2)$). Specifically, we extend the action of $\gk\ev$ trivially to $\mathfrak{r} = \gk\ev \oplus \gk_1$, and define
\[
K(\Breve{\lambda}) := \Ind_{\mathfrak{r}}^\gk \Breve{W}(\Breve{\lambda}).
\]
Then $V_\lambda$ is its irreducible quotient. 
As a vector space, $K(\Breve{\lambda}) $ is $\bigwedge (\gk_{-1}) \otimes \Breve{W}(\Breve{\lambda})$. When $\lambda$ is clear from the context, we will just write $\Breve{W}$ instead $\Breve{W}(\Breve{\lambda})$.

Let us begin with the simplest example where $\lambda$ is given by a single box $(1) = \square$. This is the second case in (\ref{eqn:2cases}) so the Kac highest weight is $(1, 0|0, -1)$. 
In other words,
$V((1 | 0,0 | -1), \bk_\natural^-) = V((1,0 | 0,-1), \bk^{\std})$,
and $V := V((1|0,0|-1), \bk_\natural^-)$ appears as a quotient of $K := K(1,0|0,-1)$.

\begin{proposition}
Following the above notation, $V$ is spherical, while $K$ is not.
\end{proposition}
\begin{proof}
The claim that $V$ is spherical follows from Proposition~\ref{prop:AlldridgePart}.
A more direct way to see this is to observe that $V \cong \mathfrak{psl}(2|2) = \mathfrak{sl}(2|2)/\mathbb{C}I_4$ since $\mathfrak{psl}(2|2)$ is simple as a Lie superalgebra and as a $\gk$-module. Its highest weight is $\epu^+-\dau^- = (1, 0|0, -1)$ as a quotient of the adjoint representation of $\gk$. 
Then $\kk$ acts trivially on the equivalent class of $\omega = \dangle{1 (-1) 1(-1)}$ in $\mathfrak{psl}(2|2)$ since $\eta_{11}$ and $\eta_{22}$ have weights $(1, 0 | -1, 0)$ and $(0, 1 | 0, -1)$ respectively under the adjoint action; the two $\xi_{ii}$ have opposite weights. All the basis vectors in $\kk$ act as 0 on $\dangle{1(-1) 1 (-1)}$.

We show that $K$ is not spherical by way of contradiction. Suppose a spherical vector $\omega = \sum c_i \Xi_i\otimes w_i$ exists, where $\Xi_i \in \bigwedge, w_i\in \Breve{W}$, and $c_i$ are some constants. Then $\omega$ should be annihilated by $\xi_{11}$ and $\xi_{22}$ and have weight 0. Then each $\Xi_i$ must contain both $\xi_{11}$ and $\xi_{22}$ and hence its weight contains $-\epu^+ - \epu^- + \dau^+ +\dau^-$. In order to have weight 0 for each $\Xi_i\otimes w_i$, the weight of $w_i$ must contain $\epu^++\epu^- -\dau^+ -\dau^-$. However, such weights do not appear in $\Breve{W}(1, 0|0, -1)$. 
\end{proof}

The contrast between the two modules reveals an important fact: the Kac module $K$ must have a non-zero vector that descends to its irreducible quotient $V$ such that the image is spherical. 
This motivates the following definition of a \textit{quasi-spherical vector}.

\begin{Definition}
Let $\Gk$ and $\Kk$ be two Lie (super)algebras such that $\Kk\subseteq \Gk$. 
Let $U$ be a $\Gk$-module with a unique maximal submodule $M$. A non-zero vector $v\in U$ is said to be $\Kk$-\textit{quasi-spherical} if $\Kk. v \subseteq M$ and $\Uk(\Gk).v = U$, and $U$ is said to be $\Kk$-\textit{quasi-spherical}.
\end{Definition}

We suppress ``$\Kk$-" if it is clear from the context. 
If $U$ is a highest weight module, then $U$ has unique maximal submodule. If $v\in U$ is cyclic ($\Uk.v = U$), then it descends to the unique irreducible quotient of $U$. If we want to show that $u\in U$ is in a proper submodule, it suffices to prove that $\Uk(\Gk).u$ does not contain any cyclic vectors. Note that a quasi-spherical vector need not be cyclic. 

We first prove that for typical $\Breve{\lambda}$ ($b\neq a-1$ in (\ref{eqn:2cases})), $K(\Breve{\lambda})$ is spherical (so trivially quasi-spherical).


\begin{theorem}\label{thm:b!=a-1}
Let $\lambda$ be $(a, 1^b)  \in \mathscr{H}(1, 1)$ with $b\neq a-1$. Let $v\in \Breve{W}(\Breve{\lambda})$ be a non-zero vector of weight $(1, 1| -1, -1)$. Then $\omega := \xi_{11}\xi_{22}\otimes v \in K(\Breve{\lambda})$ is spherical in $K(\Breve{\lambda}) = V_\lambda$.
\end{theorem} 
\begin{proof}
 If $b\neq a-1$, then $\Breve{\lambda} = (a, -a+2 | b-1, -b-1)$. 
Then the element $Y_1$ (respectively $Y_2$) lowers the first (respectively the third) entry by 1 all the way down to $-a+2$ (respectively $-b-1$), while raises the second (respectively the fourth) entry by 1 all the way up to $a$ (respectively $b-1$).   
Thus the weight $(1, 1| -1, -1)$ occurs along the way. 
We show that $\omega = \xi_{11}\xi_{22}\otimes v$ is spherical, that is, the vector $\omega$ (\textit{a}) has weight $(0, 0| 0, 0)$, (\textit{b}) is annihilated by $\xi_{ii}$, and (\textit{c}) is annihilated by $\eta_{ii}$, for $i=1, 2$. Then (\textit{a}) follows from the fact that $\xi_{11}\xi_{22}$ has weight $(-1, -1| 1, 1)$ and (\textit{b}) follows from $\xi_{ii}^2=0$. 

For (\textit{c}), we use the following computations in $\Uk$. Since $\eta_{ij}$ acts on $1\otimes\Breve{W}$ by 0, we use $A\equiv B$ for $[A]=[B]$ in $\Uk/\left(\Uk\gk_1\right)$ as equivalent classes. Thus, if $A\equiv B$, the actions of $A, B$ on $1\otimes \Breve{W}$ are the same. The explicit computations below follow from the superbracket table in Appendix \ref{app:11brackets}:
\begin{align}\label{eqn:eta11}
        \eta_{11}\xi_{11}\xi_{22} &= ([\eta_{11}, \xi_{11}]-\xi_{11}\eta_{11})\xi_{22} \notag\\
        & =  \dangle{1010}\xi_{22}+\xi_{11}\xi_{22}\eta_{11} \notag \\
        &\equiv \xi_{22}\dangle{1010},\\
\label{eqn:eta22}
\eta_{22}\xi_{11}\xi_{22} &= -\xi_{11}\eta_{22}\xi_{22} \notag \\
        & =  -\xi_{11}([\eta_{22}, \xi_{22}]-\xi_{22}\eta_{22})\notag \\ &\equiv -\xi_{11}\dangle{0101} .
\end{align}

Since $v$ has weight $(1, 1|-1, -1)$, $\dangle{1010}.1\otimes v= 0$, and by (\ref{eqn:eta11}), $\eta_{11}$ acts on $\omega$ by 0. Similarly, by (\ref{eqn:eta22}), $\eta_{22}$ acts on $\omega$ as 0. Therefore $\omega$ is spherical.

Note that the Weyl vector $\Breve{\rho}$ has coordinate $\frac{1}{2}(-1, -3|3, 1)$, and direct computations show that $(\Breve{\lambda}+\Breve{\rho}, \alpha) \neq 0$ for isotropic $\alpha$ in $\{ (1, 0|-1, 0), (1, 0|0, -1), (0, 1|-1, 0), (0, 1| 0, -1)\}$.
Thus $\Breve{\lambda}$ is typical, and $K(\Breve{\lambda}) = V_\lambda$ is irreducible and spherical.
\end{proof}

The following discussion is needed for the second case.
Suppose $\Lk = \Lk_{-1} \oplus\Lk\ev \oplus \Lk_{1}$ is a Type I Lie superalgebra. Let $\left\{\xi_i\right\}$, $\left\{X_j\right\}$, $\left\{\eta_k\right\}$ be bases for the three summands respectively. We recall that the Poincar\'e--Birkhoff--Witt theorem says the following set
\begin{equation} \label{eqn:PBWbasis}
    \left\{\xi_{i_1}\cdots \xi_{i_m} X_{j_1}\cdots X_{j_l}\eta_{k_1}\cdots \eta_{k_n}: i_1 < \cdots <i_m, j_1\leq \cdots \leq j_l, k_1<\cdots <k_n\right\}
\end{equation}
is a basis for $\Uk(\Lk)$, called the PBW basis.
Let $\deg \xi_i = -1, \deg X_j = 0, \deg \eta_k = 1$. Let $\Uk^d(\Lk)$ be the span of PBW basis vectors with $n-m = d$ as in (\ref{eqn:PBWbasis}), so
\[
\Uk(\Lk) = \bigoplus_{d\in \mathbb{Z}} \Uk^d(\Lk), \; \textup{with } \Uk^d(\Lk)\Uk^f(\Lk) \subseteq \Uk^{d+f}(\Lk).
\]
This grading is well-defined since the Lie superbracket respects the short grading on $\Lk$, and the multiplication in $\Uk(\Lk)$ respects the Lie superbracket.

Now we let $W$ be a $\Lk\ev$-module and the action is extended to $\Lk_1$ trivially. Consider $K(W) := \Ind_{\Lk\ev\oplus \Lk_1}^\Lk W$. Then $K(W) \cong \Uk(\Lk) \otimes W = \bigwedge (\Lk_{-1})\otimes W$
as vector spaces. Moreover, $K(W)$ inherits a grading from $\Uk(\Lk)$ in the sense that 
\begin{equation} \label{eqn:actionDegree}
    \Uk^d(\Lk)\bigwedge \nolimits^{k} (\Lk_{-1})\otimes W = \bigwedge \nolimits^{k-d} (\Lk_{-1})\otimes W.
\end{equation}
In particular, (\ref{eqn:actionDegree}) is valid for our Kac module $K(\Breve{\lambda})$.

\begin{theorem}\label{thm:b=a-1}
Let $\lambda = (a, 1^b)  \in \mathscr{H}(1, 1)$ with $b = a-1$. If $v\in \Breve{W}(\Breve{\lambda})$ is a non-zero vector of weight $(1, 0| -1, 0)$, then $\omega := \xi_{11}\otimes v $ is quasi-spherical in $K(\Breve{\lambda})$.
\end{theorem}

\begin{proof}
If $b=a-1$, then $\Breve{W}$ has highest weight $\Breve{\lambda} = (a, -a+1 | a-1, -a)$ and the weight $(1, 0| -1, 0)$ occurs in $\Breve{W}$, c.f. the first paragraph in the proof of Theorem~\ref{thm:b!=a-1}. To see $\omega$ is cyclic, we note that
\[
\eta_{12}\xi_{11} = X_2 -\xi_{11}\eta_{12} \equiv X_2
\]
so $\eta_{12}.\omega = 1\otimes X_2v$. Since $X_2v$ has weight $(1, 0 | 0, -1)$, which does appear for $a\geq 1$, $1\otimes X_2v$ is non-zero. By definition, $\Breve{W}$ is simple. Hence $\omega$ generates $K(\Breve{\lambda})$. 

To show that $\omega$ is quasi-spherical, we prove that $\kk.\omega$ lies in a submodule of $K(\Breve{\lambda})$, thus in the maximal submodule. This is equivalent to showing that no cyclic vectors exist in $\kk.\omega$.
Let us compute $\kk.\omega$. Since $\omega$ is a $(0, 0| 0, 0)$-weight vector, it suffices to consider the action of $\xi_{ii}$ and $\eta_{ii}$ for $i=1, 2$.
Direct computations in $\Uk$ show that $\eta_{11}\xi_{11} = \dangle{1010}-\xi_{11}\eta_{11}, \eta_{22}\xi_{11} = -\xi_{11}\eta_{22}$ and they both yield 0 on $1\otimes v$. So $\eta_{ii}.\omega=0$. Note $\xi_{11}.\omega = 0$ by construction. The only non-trivial result comes from $\xi_{22}$. By letting $\omega' := \xi_{11}\xi_{22}\otimes v$, we have
\(
\kk .\omega = \mathbb{C} \omega' .
\)

Suppose by contradiction that $\omega'$ is cyclic. Then there exists $u\in \Uk$ such that $u\omega'\in 1\otimes \Breve{W}$. 
As $\omega' =\xi_{11}\xi_{22}\otimes v \in \bigwedge\nolimits^{-2}(\gk_{-1})\otimes \Breve{W}$, we see that $u$ must be in $\Uk^2$ for $u\omega' \in 1\otimes \Breve{W} = \bigwedge\nolimits^0(\gk_{-1})\otimes \Breve{W}$ (\ref{eqn:actionDegree}). 
This means $u$ must be a linear combination of degree 2 PBW basis vectors in $\Uk^2$. In particular, each basis vector has at least 2 $\eta_{ij}$'s. 
When $p=q=1$, $\gk_{-1}$ is spanned by $\eta_{11}, \eta_{12}, \eta_{21}, \eta_{22}$ and there are $\binom{4}{2}=6$ such combinations. 
It turns out that we need (\ref{eqn:eta11}) and (\ref{eqn:eta22}) as above.

In (\ref{eqn:eta11}), note $\xi_{22}$ has weight $(0, -1| 0, 1)$ so $\xi_{22}\otimes v$ has weight $(1, -1| -1, 1)$. Then $\dangle{1010}\xi_{22} \otimes v= 0$. Hence $u\eta_{11}.\omega'=0$ for any $u\in \Uk$.

In (\ref{eqn:eta22}), the Cartan subalgebra element $\dangle{0101}$ acts on $1\otimes v$ by 0, and $\eta_{22}.\omega'=0$. Hence $u\eta_{22}.\omega'=0$ for any $u\in \Uk$.

In addition, we have
\begin{align}
    \eta_{12}\xi_{11}\xi_{22} &= (X_2-\xi_{11}\eta_{12})\xi_{22}\notag \\ 
    &= X_2\xi_{22} -
    \xi_{11}(X_1-\xi_{22}\eta_{12}) \notag\\ 
    &\equiv \xi_{12}+\xi_{22}X_{2}- \xi_{11}X_1 \notag,
\end{align}
    from which we get 
\begin{align}\label{eqn:eta2112}
    \eta_{21}\eta_{12}\xi_{11}\xi_{22} &\equiv  \eta_{21}\xi_{12}+\eta_{21}\xi_{22}X_{2}- \eta_{21}\xi_{11}X_1 \notag\\
    &= \dangle{0110} -
    \xi_{12}\eta_{21}+ (Y_2-\xi_{22}\eta_{21})X_2 - (Y_1-\xi_{11}\eta_{21})X_1 \notag \\ 
    &= \dangle{0110} -
    \xi_{12}\eta_{21}+ Y_2X_2-\xi_{22}(\eta_{22}-X_2\eta_{21}) \notag \\
    &\quad - Y_1X_1+\xi_{11}(-\eta_{11}-X_1\eta_{21}) \notag \\ 
    &\equiv \dangle{0110}+ Y_2X_2- Y_1X_1.
\end{align}

The situation in (\ref{eqn:eta2112}) requires some $\sltwo$ calculations. In (\ref{eqn:eta2112}), the first term gives $-1\otimes v$. 
The module $\Breve{W}(a, -b|b, -a)$ is $L(a, -a+1) \otimes L(a-1, -a)$ as an irreducible $\gl(2)\otimes \gl(2)$-module  (see Appendix \ref{app:11brackets}). Standard $\sltwo$ computations show that the second term gives $Y_2X_2v = a(2a-a)v = a^2v$. Similarly, the third term gives $-Y_1X_1v = -(a+1)(2a-a-1)v = (1-a^2)v$. All three terms sum up to 0. Hence $\eta_{21}\eta_{12}.\omega'=0$.

Any of the 6 combinations of two $\eta_{ij}$'s is either $\eta_{21}\eta_{12}$, or a combination that ends in $\eta_{11}$ or $\eta_{22}$. These computations imply that any $u\in \mathfrak{U}$ of degree 2 satisfies $u.\omega' = 0$. Hence $\omega'$ is not cyclic. 
\end{proof}

\begin{remark}
Any choice of quasi-spherical vector has the same image up to some constant in the quotient due to the uniqueness of spherical vectors (Proposition~\ref{prop:uniSph}). In particular, the argument still works if one chooses $\omega := \xi_{22}\otimes v'$ for a non-zero vector $v'$ of weight $(0, 1|0, -1)$. 
\end{remark}

\begin{remark}
By the discussion in Section~\ref{supPairs}, $V_\lambda$ is guaranteed to be spherical given $a = \lambda_1 > \left\langle \lambda_1'-1 \right\rangle = b$ which of course covers the case $b = a-1$ (missing ``half" of the ``$ab$-quadrant"). However, we believe it is beneficial to record a different approach to Theorem~\ref{thm:b=a-1} which requires minimal algebraic machinery and is unified with Theorem~\ref{thm:b!=a-1}.
\end{remark}

\begin{proof}[Proof of Theorem~\ref{main:11Sph}]
Let $\lambda = (a, 1^b)\in \mathscr{H}(1, 1)$. If $\lambda = \varnothing$, then $V_\lambda$ the trivial module which is spherical. Suppose $\lambda\neq \varnothing$. If $b\neq a-1$, Theorem~\ref{thm:b!=a-1} says that $V_\lambda$ has a spherical vector; if $b = a-1$, Theorem~\ref{thm:b=a-1} says that $K(\Breve{\lambda})$ is quasi-spherical, indicating that $V_\lambda$ is spherical. 
\end{proof}

\section{Generalized Verma Modules and Proof of Theorem~\ref{main:RESULT}} \label{pairArg}
In this section, we present an algebraic proof of Theorem~\ref{main:RESULT} assuming Conjecture~\ref{conj:sph}. 
We construct the generalized Verma modules $M_{-\lambda}$ and study the cosphericity of them. We then utilize the grading on $M_{-\lambda}$ to show the ``vanishing action" of $D_\mu$ for suitable $\lambda$ which eventually proves Theorem~\ref{main:RESULT}.

\subsection{Generalized Verma Modules}
Recall that $W_\lambda^*$, the component of the Cheng--Wang decomposition, is a $\kk$-module with highest weight $-\lambda^\natural_\tk$ (see (\ref{eqn:-gammaWt})) with respect to $\bk^{\opp}\oplus \bk^{\std}$. We may extend the $\kk$-action on the finite dimensional irreducible $W_\lambda^*$ trivially to $\pminus$ and define
\begin{equation} \label{eqn:I-lambda}
    M_{-\lambda} := \Ind_{\kk+\pminus}^\gk W_\lambda^*.
\end{equation}
This is a parabolic induction and we call this the generalized Verma module. Note as $\kk$-modules, 
\begin{equation} \label{eqn:IDecomp}
    M_{-\lambda} \cong \Sk(\pplus)\otimes  W_\lambda^* \cong \bigoplus_{\mu \in \cH} W_\mu \otimes W_\lambda^*
\end{equation}
where the sum is taken over all the hook partitions. The upshot of this construction is threefold: 

\begin{enumerate}
    \item The irreducible quotient of $M_{-\lambda}$ is $V_\lambda = V(-\lambda^\natural, \bk_\natural^-) =  V(-2\lambda_\hk^\natural, \bk^-)$;
    \item $M_{-\lambda}$ has a unique cospherical functional $\phi$ in the restricted dual;
    \item This $\phi$ descends to $\kappa \in (V_\lambda^*)^\kk$ if $(V_\lambda^*)^\kk \neq \{0\}$.
\end{enumerate}
Point (1) is clear from the construction. Point (2) is Proposition~\ref{prop:cosphUnique}. Point (3), proved in Proposition~\ref{prop:kappaphi}, eventually leads to Theorem~\ref{main:RESULT} (see Theorem~\ref{thm:vanCon}). 

To study the (co)sphericity of $M_{-\lambda}$, we introduce a natural grading on $M_{-\lambda}$ compatible with the $\gk$-action as follows. Recall that
\[
\gk = \pplus \oplus \kk \oplus \pminus
\]
is a $(1, 0, -1)$-grading defined on $\gk$. Since the Lie superbracket respects this grading, it induces well-defined grading on $\Uk$ by setting $\deg \eta =  1$, $\deg X = 0$, and $\deg \xi = -1$ for $\eta \in \pplus$, $X \in \kk$, and $\xi \in \pminus$, c.f. (\ref{eqn:PBWbasis}) and the discussion there. By taking bases $\{\eta_i\}$, $\{X_j\}$, and $\{\xi_k\}$ for $\pplus$, $\kk$, and $\pminus$ respectively, we may set
\begin{align*}
    \Uk^d := \Spn \{ &\eta_{i_1}\cdots \eta_{i_m} X_{j_1}\cdots X_{j_l}\xi_{k_1}\cdots \xi_{k_n}:\\
    &i_1 < \cdots <i_m, j_1\leq \cdots \leq j_l, k_1<\cdots <k_n, m-n=d\} 
\end{align*}
on $\Uk$ so $\Uk = \bigoplus \Uk^d$. Let $|\lambda| = l$. Now on $M_{-\lambda}$, we define a grading such that 
\[
    M_{-\lambda}^{-l+k} := \Sk^k(\pplus) \otimes W_\lambda^*
\]
so $M_{-\lambda} = \bigoplus_{k\geq 0} M_{-\lambda}^{-l+k}$ (\ref{eqn:IDecomp}). We set $M_{-\lambda}^{n} = \{0\}$ for $n < -l$ so the lowest degree for which $M_{-\lambda}^n$ is non-zero is $n = -l$.  We have
\begin{equation} \label{eqn:degAct}
    \Uk^d.M_{-\lambda}^{-l+k} \subseteq M_{-\lambda}^{-l+k+d}.
\end{equation}

The full dual $M_{-\lambda}^*$ is way too large to get hold of. However, we may equip the \textit{restricted dual}
\begin{equation} \label{eqn:kFinDual}
    M_{-\lambda}^{\mathsf{fin}*} := \bigoplus (W_\mu \otimes W_\lambda^*)^* \cong \bigoplus W_\mu^* \otimes W_\lambda
\end{equation}
(c.f. (\ref{eqn:IDecomp})) with the contragredient $\gk$-action. 
Explicitly, for homogeneous $X\in \Uk^d$, $f\in W_\mu^* \otimes W_\lambda$, $v_\nu \in W_\nu\otimes W_\lambda^*$, $\langle v_\nu, X.f \rangle = \pm\langle X.v_\nu, f \rangle = 0$ for all but finite $\nu$'s, and thus $X.f \in M_{-\lambda}^{\mathsf{fin}*}$. 
Note every functional in $M_{-\lambda}^{\mathsf{fin}*}$ vanishes on all but finitely many summands $W_\mu\otimes W_\lambda^*$.

\begin{proposition} \label{prop:cosphUnique}
    The generalized Verma module $M_{-\lambda}$ is cospherical. Moreover, $\dim (M_{-\lambda}^{\mathsf{fin}*})^\kk = 1$.
\end{proposition}
\begin{proof}
    Note that for each constituent in (\ref{eqn:IDecomp}), we have 
    \[
    ((W_\mu\otimes W_\lambda^*)^*)^\kk \cong (W_\mu^* \otimes W_\lambda)^\kk \cong \Hom_\kk(W_\lambda, W_\mu).
    \]
    By Schur's Lemma, this space is one-dimensional if and only if $\mu = \lambda$. Thus we obtain a unique $\phi$ corresponding to $\mathrm{Id}_{W_\lambda} \in \Hom_\kk(W_\lambda, W_\lambda)$. We now define
    \[
    \phi: M_{-\lambda} \xrightarrow[]{\mathrm{Proj}} W_\lambda \otimes W_\lambda^*\overset{\varphi}{\rightarrow}\C.
    \]
    Here the projection map $\mathrm{Proj}$ is defined with respect to (\ref{eqn:kFinDual}) which is a $\kk$-module decomposition. All maps involved are $\kk$-maps and $\phi \in M_{-\lambda}^{\mathsf{fin}*}$ is thus spherical and unique up to constant.
\end{proof}

By Definition~\ref{defn:ShimuraOp}, the Shimura operator $D_\mu \in \Uk^\kk$ can be defined explicitly as
\begin{equation}\label{eqn:Dmu}
    D_\mu := \sum_\ell \xi^{-\mu}_\ell \eta^\mu_\ell  
\end{equation}
where $\{\eta^\mu_\ell\}$ is a basis for $W_\mu \subseteq \Sk^{|\mu|}(\pplus)$ and its dual basis $\{\xi^{-\mu}_\ell\}$ for $W_\mu^*\subseteq \Sk^{|\mu|}(\pminus)$. 

\begin{proposition} \label{prop:Dmu.by0}
    For all $\lambda \neq \mu$, $|\lambda| \leq |\mu|$, $D_\mu.\phi = 0$ on $M_{-\lambda}$. 
\end{proposition}

\begin{proof}
    Since $D_\mu\in \Uk^\kk$, this action is a scalar multiplication by Lemma~\ref{lem:scalar}, and thus it suffices to prove that $D_\mu.\phi$ vanishes on $W_\lambda \otimes W_\lambda^*$ (of degree 0) for all $\lambda \neq \mu$ with $l := |\lambda| \leq |\mu|$, that is,
    \begin{equation*}
        \langle W_\lambda \otimes W_\lambda^*, D_\mu.\phi \rangle = \sum \langle W_\lambda \otimes W_\lambda^*, \xi_\ell^{-\mu}\eta_\ell^\mu.\phi \rangle = \{0\}.
    \end{equation*}

If $|\mu| >l $, we use the following degree argument. Each $\xi_\ell^{-\mu}$, of degree $-|\mu|$, acts contragrediently on $W_\lambda \otimes W_\lambda^*$, of degree 0. By (\ref{eqn:degAct}) $\xi_\ell^{-\mu}W_\lambda \otimes W_\lambda^*$ is in $M_{-\lambda}^{-|\mu|} = \{0\}$. 
Thus all pairings in the summation are 0.

If however $|\mu| = l$, then the actions of $\xi_\ell^{-\mu}$ followed by $\eta_\ell^\mu$ move $W_\lambda\otimes W_\lambda^*$ up into $W_\mu\otimes W_\lambda^*$. But $\phi$ vanishes on $W_\mu\otimes W_\lambda^*$ whenever $\lambda \neq \mu$ by definition of $\phi$. Thus the pairing again gives $0$. 
\end{proof}

\subsection{The vanishing properties}\label{subsec:coupdegrâce}
Assuming Conjecture~\ref{conj:sph}, together with Proposition~\ref{prop:Sph=coSph}, we see that there exists a unique up to constant spherical functional $\kappa$ on $V_\lambda$. The next result relates $\kappa$ and $\phi$ on $M_{-\lambda}$ introduced above.

\begin{proposition} \label{prop:kappaphi}
    The spherical functional $\phi \in M_{-\lambda}^{\mathsf{fin}*}$ descends to $\kappa$ on $V_\lambda$, up to a constant.
\end{proposition}
\begin{proof}
    Let $U \subseteq M_{-\lambda}$ be the maximal submodule. Denote the projection $M_{-\lambda} \rightarrow M_{-\lambda}/U \cong V_\lambda$ by $\mathsf{proj}$. Define $\overline{\kappa} := \kappa \circ \mathsf{proj}$ on $M_{-\lambda}$. Since $\mathsf{proj}$ is a $\kk$-map,  $\overline{\kappa}$ is also spherical. 
    This $\overline{\kappa}$, by default, is in the full dual. 
    We prove that $\overline{\kappa}\in M_{-\lambda}^{\mathsf{fin}*}$. 
    
    Recall $ M_{-\lambda}^{-l+k}=\Sk^k(\pplus)\otimes W_\lambda^*.$ We first show that for $k > |\lambda| = l$, $E_k := \kk \Uk(\kk)M_{-\lambda}^{-l+k}$ is all of $ M_{-\lambda}^{-l+k}$. For the sake of contradiction, we pick a non-zero $T \in (M_{-\lambda}^{-l+k})^*$ that vanishes on $E_k$. Since $T(E_k) = 0$, we have $T(X.v) = 0$ for all $X \in \kk$ and $v\in M_{-\lambda}^{-l+k}$. So $T$ is cospherical. Thus $T \in \Hom_\kk (M_{-\lambda}^{-l+k}, \C) = \Hom_\kk (\Sk^k(\pplus)\otimes W_\lambda^*, \C) \cong \Hom_\kk(\Sk^k(\pplus), W_\lambda)$. But this space is zero whenever $k > |\lambda|$ by the Cheng--Wang decomposition (Proposition~\ref{prop:goodCW}). Contradiction. 
    
    Now as $E_k = M_{-\lambda}^{-l+k}$ for $k > l$, for any $v\in M_{-\lambda}^{-l+k}$, there exists $X \in \kk\Uk(\kk)$ and $v'\in M_{-\lambda}^{-l+k}$ such that $v = X.v'$. Hence $\overline{\kappa}(v) = \overline{\kappa}(X.v') = 0$, which implies that $\overline{\kappa}$ vanishes on all $M_{-\lambda}^{-l+k}$ for large $k$ ($k > l$). 
    Thus $\overline{\kappa}\in M_{-\lambda}^{\mathsf{fin}*}$. 
    Consequently, $\overline{\kappa}$ is a constant multiple of $\phi$ by Proposition~\ref{prop:cosphUnique}. Therefore, up to a constant, $\phi$ vanishes on $U$ and descends to $\kappa$ on $V_\lambda$.
\end{proof}

We now introduce a result that computes the eigenvalue of the action of $\Uk^\kk$ on modules. 
Assuming Conjecture~\ref{conj:sph}, then $V_\lambda$ is also cospherical by Proposition~\ref{prop:Sph=coSph}.

\begin{theorem}\label{thm:scalar}
If $D \in \Uk^\kk$, then $D$ acts on $\kappa \in (V_\lambda^*)^\kk$ by the scalar $\hcHomoGK(D)(2{\lambda^\natural}+\rho)$.
\end{theorem}
\begin{proof}
For an arbitrary $D\in \mathfrak{U^\kk}$, we write $D = uK+N^-u'+p \in \mathfrak{U}^\kk$ where $p= \pi(D)$ and $K\in \kk, N^-\in \mathfrak{n}^-, u, u'\in \mathfrak{U}$. Let us consider the pairing $\langle v, D.\kappa \rangle$, where $v$ is now of $\bk^-$-highest weight $-2\lambda_\hk^\natural$ (\ref{eqn:CWweightsOnA}) by Proposition~\ref{prop:even}.

On one hand, by Lemma~\ref{lem:scalar}, this pairing equals $C\langle v, \kappa \rangle$ for some constant $C$ depending on $D$.
On the other hand, we have
\begin{align*}
    \langle v, D.\kappa \rangle &= \langle v, uK.\kappa \rangle +\langle v,  N^- u'.\kappa \rangle + \langle v, p. \kappa \rangle && [D = uK+N^-u'+p ] \\
    &= \langle v, 0 \rangle +\langle 0, u'\kappa \rangle  + \langle v, p. \kappa \rangle && [\text{Contragredient action}]\\
    &=0+0+\langle v, p.\kappa \rangle .
\end{align*}

Let us write $p = \sum \prod a_i^{n_i}$ as a polynomial in $\mathfrak{S(a)} \cong \mathfrak{P}(\ak^*)$ (c.f. (\ref{eqn:identification})). Then $\langle v, p.\kappa \rangle = \langle \sum \prod (-1)^{n_i}a_i^{n_i}.v, \kappa \rangle$ as all $a_i$'s are even and commute. This gives
\begin{align*}
    \langle v, D.\kappa \rangle    &= \sum \prod \langle (-1)^{n_i}a_i^{n_i}.v, \kappa \rangle \\
    &= \sum\prod (-1)^{n_i} (a_i(-2\lambda_\hk^\natural|_\ak))^{n_i}\langle v, \kappa \rangle \\
    &= \sum \prod (-1)^{n_i}(-1)^{n_i}a_i(2{\lambda^\natural})^{n_i}\langle v, \kappa \rangle\\
    &= p(2{\lambda^\natural})\langle v, \kappa \rangle.
\end{align*}
Since $\langle v, \kappa \rangle \neq 0$ by Proposition~\ref{prop:non0pair}, the calculation yields $C = p(2{\lambda^\natural}) = \pi(D)(2{\lambda^\natural})$. Finally, (\ref{eqn:rhoshift}) says $C = \hcHomoGK(D) (2{\lambda^\natural}+\rho)$.
\end{proof}

\begin{theorem} \label{thm:vanCon}
For all $\lambda \neq \mu$,  $|\lambda|\leq |\mu|$, $\hcHomoGK(D_\mu)(2{\lambda^\natural}+\rho) = 0$, where $2{\lambda^\natural} = 2\lambda_\hk^\natural|_\ak$.
\end{theorem}

\begin{proof}
    By Proposition~\ref{prop:Dmu.by0}, $D_\mu$ acts on $\phi \in M_{-\lambda}^{\mathsf{fin}*}$ by 0. By Proposition~\ref{prop:kappaphi}, $\phi$ descends to (up to a constant) $\kappa$ on $V_\lambda$, and $\langle v, D_\mu.\phi \rangle = \langle v+M, D_\mu.\kappa \rangle = 0$ on $V_\lambda$.
    Therefore, $D_\mu$ acts on $\kappa$ by 0. By Theorem~\ref{thm:scalar}, we obtain the desired result.
\end{proof}


Recall the Shimura operator $D_\mu$ (\ref{eqn:Dmu}) associated with $\mu$, can be written as $\sum \xi^{-\mu}_\ell \eta^\mu_\ell$.
In particular, we see that $\deg D_\mu \leq 2|\mu|$. Define a scalar $c_\mu$ so that
\[
c_\mu I_\mu(2{\mu^\natural}+\rho) = \hcHomoGK(D_\mu)(2{\mu^\natural}+\rho).
\]

\begin{proof}[Proof of Theorem~\ref{main:RESULT}]
In Proposition~\ref{prop:imGammaSupSym}, we see that the $\hcHomoGK$ image of $D_\mu$ does land in $\RingEv(\ak^*)$. By definition of $\hcHomoGK$, we have $\deg\hcHomoGK(D_\mu) \leq 2|\mu|$. Theorem~\ref{thm:vanCon} and the uniqueness of $I_\mu$ imply that $\hcHomoGK(D_\mu)$ is proportional to $I_\mu$.

We show that $\hcHomoGK(D_\mu)$ is non-zero. It suffices to prove that $D_\mu \notin \mathfrak{U}\kk$, which contains $\ker \hcHomoGK$ by Theorem~\ref{thm:prelHC}. 
We notice that $D_\mu$, by definition, is an element in $\mathfrak{S}(\pk^-)\mathfrak{S}(\pk^+)$. Since $\gk = \pminus \oplus \pplus \oplus \kk$, by the Poincaré--Birkhoff--Witt theorem, we may write
\[
\mathfrak{U} = \mathfrak{S}(\pk^-)\mathfrak{S}(\pk^+)\oplus \mathfrak{U}\kk 
\]
and since $D_\mu$ is clearly non-zero, $\hcHomoGK(D_\mu)$ is also non-zero. 
Finally, by definition of $c_\mu$, we see that $\hcHomoGK(D_\mu) = c_\mu I_\mu$ where $c_\mu\neq 0$.
\end{proof}

\begin{remark}\label{rmk:whyInsufficient}
We explain why the condition for sphericity given in \cite{a2015spherical} (Proposition~\ref{prop:AlldridgePart}) is insufficient. 
Consider the case $p=q=1$, and let $\mu$ be $(2)\in \mathscr{H}(1, 1)$. Thus, $|\lambda| \leq |\mu|$ means $\lambda = (1^n)$ for $n = 0, 1, 2$.
By (\ref{eqn:AiRho}), $\rho = (-1, 1)$, and $2{\lambda^\natural}+\rho = (1, 2n-1)$.
Then $I_\mu(1, 2n-1) = 0$ for $n = 0, 1, 2$.
By Proposition~\ref{prop:AlldridgePart}, the only partition in the form of $(1^n)$ guaranteed to give a spherical $V_\lambda$ is $\lambda = (1)$.
By the proof of Theorem~\ref{thm:vanCon} without assuming Theorem~\ref{main:11Sph} (Conjecture~\ref{conj:sph}), we see $\hcHomoGK(D_\mu)(1, 1)=0$.
But any degree 4 even supersymmetric polynomial in two variables is proportional to $f(x, y) = (x^2-y^2)(x^2+ay^2+b)$ and $f(1, 1)=0$ automatically. Thus $\hcHomoGK(D_\mu)$ cannot be pinned down by one single partition $\lambda = (1)$.
In fact, we have $\hcHomoGK(D_\mu)$ proportional to $(x^2-y^2)(x^2-1)$ which indeed vanishes at $(1, 2n-1)$ for all $n\geq 0$ (see the \textit{extra} vanishing properties in Appendix~\ref{app:GRS}).
\end{remark}

\section*{Acknowledgment}
The author would like to thank his advisor Siddhartha Sahi for his munificent help in proposing the project and providing insights; and to thank Shifra Reif for enlightening conversations. In the earlier stage of the project, the author also received great help from Hadi Salmasian. The author would also like to thank the anonymous referees who munificently helped to improve the presentation of this work.
\appendix





\section{Deformed Root System and Interpolation Polynomials} \label{app:GRS}

We introduce the deformed root systems studied by Sergeev and Veselov in \cites{SV2004DQCP, SV09}. These root systems are based on the generalized root systems introduced by Serganova \cite{S96GRS}.

Let $V = \mathbb{C}^{m+n}$ be a vector space with basis $\left \{ \epsilon_1 , \dots, \epsilon_{m+n} \right \}$. Set $I\ev :=\left \{ 1, \dots, m \right \}$, $I\od := \left \{ m+1, \dots, m+n \right \}$, and $I := I\ev \cup I\od$. Define $\delta_i = \epsilon_{m+i}$ for $1\leq i\leq n$. A bilinear form on $V$ is given by
\[
B(u, v) := \sum_{i\in I\ev} u_iv_i - \sum_{j\in I\od} u_jv_j
\]
where $x_i$ denote the $i$-th coordinate of $x= \sum_{i\in I} x_i\epsilon_i \in V$.
We say a vector $v$ is isotropic (respectively anisotropic) if $B(v, v) = 0$ (respectively $\neq 0$). 
Following \cite{SV2004DQCP}, we set
\begin{enumerate}
    \item $C(m, n)$: Let $C(I) :=\left \{ \pm 2\epsilon_i, \pm \epsilon_i \pm\epsilon_j : i, j\in I, i\neq j \right \}$. We define $\ani{\Sigma} :=  C(I\ev)\cup C(I\od)$ and $\iso{\Sigma} = \left \{\pm \epsilon_i\pm \epsilon_j :  i\in I\ev, j\in I\od \right \}$ for anisotropic and isotropic roots respectively. Then $C(m, n)$ is defined as $\ani{\Sigma} \cup \iso{\Sigma}$. 
    
    \item $BC(m, n)$: Let $BC(I)= \left \{\pm \epsilon_i, \pm 2\epsilon_i, \pm \epsilon_i \pm\epsilon_j : i, j\in I, i\neq j \right \}$. We define $\ani{\Sigma} :=  BC(I\ev)\cup BC(I\od)$ and $\Sigma\od = \left \{\pm \epsilon_i\pm \epsilon_j :  i\in I\ev, j\in I\od \right \}$ for anisotropic and isotropic roots respectively. Then $BC(m, n)$ is defined as $\ani{\Sigma} \cup \iso{\Sigma}$. 
\end{enumerate}
The associated Weyl group is of Type \textit{BC}, isomorphic to $(\mathscr{S}_m \ltimes (\mathbb{Z}/2\mathbb{Z})^m)\times (\mathscr{S}_n \ltimes (\mathbb{Z}/2\mathbb{Z})^n)$.
In \cite{SV2004DQCP}, an admissible deformation of these generalized root systems is introduced. This appears in the study of the Calogero--Moser--Sutherland problem in which the root system stays the same while the bilinear form and the multiplicities are ``tweaked". The deformed bilinear form is
\[
B_\mathsf{k}(u, v) := \sum_{i\in I\ev} u_iv_i + \mathsf{k}\sum_{i\in I\od} u_iv_i.
\]
Five parameters regarding the multiplicities of the roots are presented as follows:
\begin{align*}
    &\mult\left(\pm\epsilon_i\pm \epsilon_j\right) = \mathsf{k}, \mult\left(\pm\epsilon_i\right) = \mathsf{p}, \mult\left(\pm 2\epsilon_i\right) = \mathsf{q}, \quad i, j\in I\ev;
\\
&\mult\left(\pm\delta_i\pm \delta_j\right) = \mathsf{k}^{-1}, \mult\left(\pm\delta_i\right) = \mathsf{r}, \mult\left(\pm 2\delta_i\right) = \mathsf{s} , \quad 1\leq i, j \leq n.
\end{align*}
They satisfy the following two relations:
\[
\mathsf{p = kr}, \mathsf{2q+1 = k(2s+1)}.
\]
The multiplicities of isotropic roots ($\pm \epu_i\pm \dau_j$) are set to be 1 (c.f. the parameter $q\od$ in Section~\ref{supPairs}), and the form and all multiplicities stay $W_0$ invariant. In particular, the restricted root system in Section~\ref{supPairs} is indeed a special case.

We now explain how we obtain our version of the Type $BC$ interpolation polynomials from the ones that appear in Section 6, \cite{SV09}. 

Let $\left \{ \epsilon_i , \delta_j \right \}$ be the standard basis for $\mathbb{C}^{p+q}$, $z_i$ and $w_j$ be the coordinates of $\epsilon_i$ and $\delta_j$ for $i = 1, \dots, p$, $j = 1,  \dots, q$.
Let $\mathsf{k}$ and $\mathsf{h}$ be two parameters. Following \cite{SV09}, we assume $\mathsf{k} \notin \mathbb{Q}_{>0}$ (called \textit{generic}). 
Define $\varrho = \sum\varrho_i\bos \epsilon_i + \sum \varrho_j\fer \delta_j$ where
\[
\varrho_i\bos := -(\mathsf{h}+\mathsf{k}i), \; \varrho_j\fer :=  -\mathsf{k}^{-1}\left(\mathsf{h}+\frac{1}{2}\mathsf{k}-\frac{1}{2}+j+\mathsf{k}p\right).
\]
\begin{Definition}[c.f. Definition~\ref{defn:noshiftBCSym}]
\label{defn:LambdaRing}
Let $P_{p, q} := \mathbb{C}[z_1, \dots, z_p, w_1, \dots, w_q]$ be the polynomial ring on $\mathbb{C}^{p+q}$. We define $\Lambda^\varrho$ to be the subalgebra of polynomials $f \in P_{p, q}$ which
\begin{enumerate}
    \item are symmetric separately in variables $(z_i-\varrho_i\bos)$ and $(w_j-\varrho_j\fer)$, and invariant under their sign changes;
    \item satisfy the condition $f(X-\epsilon_i+\delta_j) = f(X)$ on the hyperplane $z_i+\mathsf{k}(i-1-p) = \mathsf{k}w_j+j-1$.
\end{enumerate}

\end{Definition}

We equip $\mathbb{C}^{p+q}$ with an inner product defined by 
\[
(\epsilon_i, \epsilon_j) = \delta_{i, j}, \; (\delta_i, \delta_j) = \mathsf{k}\delta_{i, j}, \; (\epsilon_i, \delta_j) = 0
\]
where $\delta_{i, j}$ is the Kronecker delta. Then Condition (2) can be rephrased as:
\begin{enumerate}
    \item[(2')] $f(X+\alpha)=f(X)$ when $X$ is in the hyperplane defined by the equation $$(X-\varrho, \alpha) + \frac{1}{2}(\alpha, \alpha)=0$$ for all $\alpha = \epsilon_i-\delta_j$ for $i=1, \dots, p$, $j = 1, \dots, q$.
\end{enumerate}

The following table compares the notation used in \cite{SV09} and this paper. Since we will alter the constructions and proofs in \cite{SV09}, \textit{in what follows, we will use the notation given on the left.
}

\noindent\begin{tabu}{|X[c, m]|X[c, m]|} 
\hline
\cite{SV09} & Z.\\
\hline
$m, n$  & $p, q$   \\
\hline
$k, p, q, r, s, h$  & $\mathsf{k, p, q, r, s, h}$ \\
\hline
$m(-), \epu, \dau, R^+, \rho$ & $\mult(-), \epr, \dar, \Sigma^+,  \varrho$ \\
\hline
$\Lambda^{(k, h)}_{m, n}, w, z$ & $\Lambda^\varrho, x, y$ \\
\hline
\end{tabu}

The origin of the discrepancy lies in a different choice of positivity of the root system. Formula (71) in \cite{SV09} reads ``$R^+_{iso} = \left\{\delta_p\pm \epsilon_i \right\}$". That is, they require $\dar_j \pm \epr_i$ in this paper to be positive. 
The Weyl vector in \cite{SV09} is $\rho:= \frac{1}{2}\sum_{\alpha\in R^+} \mult(\alpha)\alpha$.

In the proof of \cite{SV09}*{Proposition~6.3}, they used the following restriction map $\mathrm{res}_{m, n}$ from $\Lambda^{(k, h)}$, the ring of symmetric functions of Type $BC$, to $P_{m, n} = \mathbb{C}[w_1, \dots, z_n]$.
For any $\lambda \in \mathscr{H}(m, n)$, let $\nu$ be the transpose of the first $n$ columns and $\mu$ the remaining part. 
Let $\chi: \mathscr{H}(m, n) \rightarrow \mathbb{C}^{m+n}$ be the map which sends $\lambda$ to $(\mu_1, \dots, \mu_m, \nu'_1,\dots ,\nu'_n)$. The image of $\chi$ is Zariski dense and any symmetric function in $\Lambda^{(k, h)}$ can be restricted to a function on $\mathscr{H}(m, n)$. 
Denote this restriction map as $\mathrm{res}_{m, n}:\Lambda^{(k, h)} \rightarrow P_{m, n}$. More precisely, given $f \in \Lambda^{(k, h)}$, $\mathrm{res}_{m, n}(f)$ is the unique polynomial such that $f(\lambda) = \mathrm{res}_{m, n}(f)(\chi(\lambda))$ for any $\lambda \in \mathscr{H}(m, n)$.
Then \cite{SV09}*{Theorem~6.1} was shown using the even Bernoulli polynomials. Recall that if $k \notin \mathbb{Q}_+$, it is called generic. 

\begin{theorem}[\cite{SV09}*{Theorem~6.1}]\label{thm:res}
If $k$ is generic, then $\Ima \mathrm{res}_{m, n} = \Lambda_{m, n}^{(k, h)}$.
\end{theorem}

Proposition~6.3 in \cite{SV09} then follows from Theorem~\ref{thm:res} by applying $\mathrm{res}_{m, n}$ to the Okounkov interpolation polynomials in $\Lambda^{(k, h)}$. That is, 
\[
\textup{Sergeev--Veselov Interpolation Polynomial} = \mathrm{res}_{m, n}(\textup{Okounkov Polynomial})
\]

The readers may recall that $\lambda^\natural$ is used in our paper consistently rather than $\chi(\lambda)$. This subtlety is merely a combinatorial consequence of different choices of positivity. We present how to adapt Theorem~\ref{thm:res} with our choice of positivity:

\begin{enumerate}
    \item In \cite{SV09}*{(71)}, change $R^+_{iso} = \left\{\delta_p\pm \epsilon_i \right\}$ to $R^+_{iso} = \left\{\epsilon_i\pm \delta_p \right\}$, so $\rho$ \cite{SV09}*{(72)} becomes
    \[
    -\sum_{i=1}^m (h+ki)\epsilon_i -k^{-1}\sum_{j=1}^n\left(h+\frac{1}{2}k-\frac{1}{2}+j+km\right)\delta_j.
    \]
    
    \item Consequently, the definition of the ring $\Lambda_{m, n}^{(k, h)}$ is now the subalgebra consisting of polynomials symmetric separately in
    \[
    (w_i+h+ki)^2 \text{ and }\left(z_j+hk^{-1}+\frac{1}{2}-\frac{1}{2}k^{-1}+jk^{-1}+m\right)^2
    \]
    and satisfying the same supersymmetry condition, changing $\alpha$ to $\epsilon_i-\delta_p$.
    
    \item For $\lambda$, bisect it by taking the first $m$ rows and the rest $n$ columns, so that we get $\lambda^\natural =: \chi(\lambda)$. Equivalently, $w_i = \lambda_i$ and $z_j = \langle \lambda_j'-m \rangle$. Again, this new version of $\chi$ has a Zariski dense image and $\mathrm{res}_{m, n}$ is well-defined. Here a partition $\lambda$ is regarded as a collection of boxes with coordinates $(i, j)$ for $1\leq i \leq \ell(\lambda)$ and $1\leq j \leq \lambda_i$.
    \item The statement of \cite{SV09}*{Theorem~6.1} remains verbatim, while in the proof, the polynomial $f_l^{m, n}$ is now changed to
    \begin{align*}
        f_l^{m, n}(w, z) = &\sum_{i=1}^m\left(B_{2l}\left(w_i+h+ki+\frac{1}{2} \right)-B_{2l}\left(h+ki+\frac{1}{2}\right) \right)\\
        &+k^{2i-1}\sum_{j=1}^n\left(B_{2l}\left( z_j+hk^{-1}-\frac{1}{2}k^{-1}+jk^{-1}+1+m\right) \right. \\
        & \left. -B_{2l}\left(hk^{-1}-\frac{1}{2}k^{-1}+jk^{-1}+1+m \right) \right).
    \end{align*}
    where $B_{2l}$ denotes the even Bernoulli polynomial (pp. 125-127 \cite{WW5ed})
    This version has value at $(w, z)=\lambda^\natural$ exactly equal to $f_l(\lambda)$. The rest of the proof is the same.
\end{enumerate}

Switching back to the notation in this paper, we see that $\mathrm{res}_{m, n}$ becomes $\mathrm{res}_{p, q}: \Lambda^{(\mathsf{k, h})}\rightarrow \Lambda^\varrho$.
By the above adaptation of Theorem~\ref{thm:res}, we obtain:

\begin{theorem}\label{thm:ExtraBCPoly}
For each $\mu \in \cH$, there exists a unique degree $2|\mu|$ polynomial $J_\mu \in \Lambda^\varrho$ such that
\begin{equation} \label{eqn:extraVan}
    J_\mu(\lambda^\natural; \mathsf{k, h}) = 0,  \quad \text{for all } \lambda \nsupseteq \mu
\end{equation}
and satisfies the normalization condition
\[
J_\mu(\mu^\natural; \mathsf{k, h}) = \prod_{(i, j)\in \mu}\left(\mu_i -j-\mathsf{k}(\mu'_j -i)+1 \right)\left(\mu_i +j+\mathsf{k}(\mu'_j +i)+2\mathsf{h}-1 \right).
\]
Moreover, they constitute a basis for $\Lambda^\varrho$.
\end{theorem}

It is not hard to see that for a degree $2|\mu|$ symmetric polynomial, the above vanishing properties (\ref{eqn:extraVan}), namely, all $\lambda$ that do not contain $\mu$, overdetermine $J_\mu$. Indeed, it is entirely possible to reduce this extra vanishing property.

Define $\Lambda^\varrho_d := \{f\in \RingEv: \deg f\leq 2d\}$, $\cH_d := \bigcup_{k\leq d} \cH^k$, and $2\cH_d^\natural := \left\{2{\lambda^\natural}: \lambda\in \cH_d\right \} \subseteq \C^{p+q}$.

\begin{proposition} \label{prop:resSurj}
    Every $f\in \Lambda^\varrho_d$ is determined by its values on $2\cH_d^\natural$.
\end{proposition}
\begin{proof}
    Let $\mathcal{V}_d$ be the space of functions on $2\cH_d^\natural$.
    Then $\dim \Lambda^\varrho_d = \dim \mathcal{V}_d= |\cH_d|$.
    In particular, $\mathcal{V}_d$ has a Kronecker-delta basis $\{\delta_{\lambda}: 
    \delta_\lambda(2{\lambda^\natural}) = 1, \delta_\lambda(2{\mu^\natural})=0, \lambda, \mu \in \cH_d \}$. 
    Next, the evaluation of $f\in \Lambda^\varrho_d$ on $\cH_d$ gives a restriction map
    \[
    \mathtt{res}: \Lambda^\varrho_d \rightarrow \mathcal{V}_d.
    \]
    To prove the statement, we show that $\mathtt{res}$ is an isomorphism. 

    Fix a total order $\succ$ on $\cH_d$ such that $\mu \succ \lambda$ implies $|\mu| \geq |\lambda|$. Let $R$ be the matrix  for $\mathtt{res}$ with respect to the bases $\{J_\mu\}$ for $\Lambda^\varrho_d$ and $\{\delta_\lambda\}$ for $\mathcal{V}_d$ arranged by $\succ$. 
    Since $J_\mu(2{\mu^\natural}) \neq 0$, and $J_\mu(2{\lambda^\natural}) = 0$ for any $\lambda$ such that $\mu \succ \lambda$, we see that $R$ is upper triangular with non-zero diagonal entries. 
    Therefore $R$ is invertible, proving the statement.
\end{proof}

As a direct consequence of Proposition~\ref{prop:resSurj}, we can reduce the extra vanishing properties (\ref{eqn:extraVan}) in Theorem~\ref{thm:ExtraBCPoly} to 
\begin{equation} \label{eqn:reducedVan}
    J_\mu(\lambda^\natural; \mathsf{k, h}) = 0,  \quad \text{for all } |\lambda| \leq |\mu|, \lambda \neq \mu.
\end{equation}

\begin{proof}[Proof of Theorem~\ref{thm:BCPolyRho}]
    We specify the parameters $\mathsf{k, h}$ for the ring $\Lambda^\varrho$ (Definition~\ref{defn:LambdaRing}) and $J_\mu$ in Theorem~\ref{thm:ExtraBCPoly} with the restricted root data of $\Sigma:= \Sigma(\gk, \ak)$ (see Section~\ref{supPairs}). 
    Then $\mathsf{k}=-1$. For $\mathsf{h} := -\mathsf{k}p-q-\frac{1}{2}\mathsf{p}-\mathsf{q}$ (following \cite{SV09}), we have $\mathsf{h} = p-q+\frac{1}{2}$.

    Also, $\varrho=-\frac{1}{2}\rho$ (see (\ref{eqn:AiRho})). Thus we consider $J_\mu$ in $\Lambda^{-\frac{1}{2}\rho}$ as in Theorem~\ref{thm:ExtraBCPoly} (with (\ref{eqn:reducedVan})). 
    Define the change of variables $\tau:\Lambda^{-\frac{1}{2}\rho} \rightarrow \mathfrak{P}(V)$ by $z_i \mapsto \frac{1}{2}(x_i-\rho_i\bos)$ and $w_j \mapsto \frac{1}{2}(y_j-\rho_j\fer)$. 
    Then $\tau$ preserves the ring structures and is bijective onto its image.
    Indeed, for $f\in \Lambda^{-\frac{1}{2}\rho}$, $\tau(f)$ is a polynomial symmetric in $\left(\frac{1}{2}(x_i-\rho_i\bos)+\frac{1}{2}\rho_i\bos\right)^2 = x_i^2$ and $\left(\frac{1}{2}(y_i-\rho_j\fer)+\frac{1}{2}\rho_j\fer\right)^2 = y_j^2$. 
    Also, $f \left(\mu\right) = f \left(\mu + \alpha\right)$ when $\left(\mu+\frac{1}{2}\rho, \alpha\right) = 0$ for $\alpha =\epsilon_i-\delta_j$. 
    Hence, $\tau(f)(X+e_i-d_j)=\tau(f)(X)$ when $x_i+y_j=0$, seen by substituting $X \mapsto 2\mu+\rho$. Therefore $\Ima(\tau) = \RingEv(V)$. 
    Theorem~\ref{thm:BCPolyRho} now follows by setting
    \begin{equation} 
        I_\mu (x_i, y_j) := \tau\left(J_\mu\left(z_i, w_j; -1, p-q-\frac{1}{2}\right)\right). \tag*{\qedhere}
    \end{equation}
\end{proof}


\section{Section~\ref{sphRep} Computations} \label{app:11brackets}
In this appendix, we first give the table of superbrackets on $\gl(2|2)$ with the notation in Section~\ref{sphRep}, then show the detailed $\sltwo$-computations in the same section.

In the following table, the value of the superbracket of the $i$-th entry in the far left column and the $j$-th entry in the top row is given by the $(i, j)$-th entry.

\begin{center} \footnotesize
    \begin{tabular}{c|cccc cccc}
 & $\xi_{11}$  & $\xi_{12}$  & $\xi_{21}$  & $\xi_{22}$       & $X_1$      & $X_2$       & $Y_1$       & $Y_2$        \\ \hline
$\eta_{11}$    &$\dangle{1010}$&  $X_1$     & $Y_2$       & 0     & 0           & $\eta_{12}$ & $-\eta_{21}$ & 0               \\
$\eta_{12}$          & $X_2$        & 0           &$\dangle{1001}$& $X_1$   & 0           & 0           & $-\eta_{22}$  & $\eta_{11}$ \\
$\eta_{21}$          & $Y_1$        &$\dangle{0110}$& 0   & $Y_2$     & $-\eta_{11}$ & $\eta_{22}$& 0           & 0     \\
$\eta_{22}$          & 0            & $X_2$       & $Y_1$        & $\dangle{0101}$    & $-\eta_{12}$ & 0           & 0           & $\eta_{21}$  \\
$X_1$          & $-\xi_{12}$ & 0           & $-\xi_{22}$ & 0     & 0 & 0 & $\dangle{1 {(-1)} 0 0}$  & 0    \\
$X_2$          & 0           & 0           & $\xi_{11}$  & $\xi_{12}$  & 0 & 0 & 0 & $\dangle{001(-1)}$ \\
$Y_1$          & 0           & -$\xi_{11}$ & 0           & $-\xi_{21}$  & $\dangle{(-1)1 0 0}$ & 0  & 0 & 0\\
$Y_2$          & $\xi_{21}$  & $\xi_{22}$  & 0           & 0 & 0 & $\dangle{00(-1)1}$ & 0 & 0          
\end{tabular}
\end{center}

Next, we present the $\sltwo$-computations. Let $\{x, h, y\}$ be an $\sltwo$-triple with the standard relations $[h, x]=2x, [h, y]=-2y, [x, y]=h$.
Let $L(m)$ be the $(m+1)$-dimensional irreducible $\sltwo$-module spanned by the weight vectors 
\begin{equation} \label{eqn:sl2Y}
\left \{ v_{-m},\dots , v_{-m+2i}, \dots, v_{m} \right \} \textup{ with } 
\begin{cases}
h.v_{-m+2i} = (-m+2i)v_{-m+2i} \\ 
x.v_{-m+2i} = v_{-m+2i+2} \\
y.v_{-m+2i} = i(m+1-i)v_{-m+2i-2}
\end{cases}.
\end{equation}
Since $\dim L(m) = m+1$, the scalar in the action of $y$ is $i(\dim L(m)-i)$.

Using the natural representation of $\gl(2)$, we write
\[
X = \begin{pmatrix}
0 & 1 \\
0 &  0\\
\end{pmatrix}, Y = \begin{pmatrix}
0 & 0 \\
1 & 0\\
\end{pmatrix}, H = \begin{pmatrix}
1 & 0 \\
0 & -1\\
\end{pmatrix}, Z = \begin{pmatrix}
1 & 0 \\
0 & 1\\
\end{pmatrix},
\]
which give a basis for $\gl(2)$ and $\{X, H, Y\}$ is an $\sltwo$-triple. Denote the standard characters on the diagonal Cartan algebra $\hk = \Spn\{H, Z\}$ as
\[
\epsilon^\pm: \begin{pmatrix}
x^+ & 0 \\
0 & x^-\\
\end{pmatrix}\mapsto x^\pm.
\]
Let $a\geq b$ be two integers. Let $L(a, b)$ be the irreducible $\gl(2)$-module of highest weight $a\epsilon^++b\epsilon^-$ with respect to the standard Borel subalgebra $\Spn\{X, \hk\}$. 
A weight vector basis of $L(a, b)$ can be described by the set of symbols
\[
\{(b, a), (b+1, a-1), \dots, (a, b)\}
\]
such that $h.(m, n) = (m\epsilon^+ +n\epsilon^-)(h)(m, n)$ for $h\in \hk$ and $X.(m, n) = (m+1, n-1)$, whenever $(m+1, n-1)$ is a weight, and 0 otherwise. 
As an $\sltwo$-module, $L(a, b)$ is isomorphic to $L(a-b)$ by identifying $(b+i, a-i)$ with $v_{-(a-b)+2i}$ for $i = 0, 1, \dots, a-b$.

For $\gl(2)\oplus\gl(2)$, we use subscripts $i = 1, 2$ for $X, Y, H, Z$ in the $i$-th copy of $\gl(2)$. The standard characters on the second copy are denoted as $\delta^\pm$.
Consider the irreducible $\gl(2)\oplus\gl(2)$-module $L(a, b)\otimes L(c, d)$ with the highest weight $a\epu^+ + b\epu^-+c\dau^++d\dau^-$ with respect to the standard Borel subalgebra in both copies.
Denote it as $\Breve{W}(a, b| c, d)$.
Now each weight space in $\Breve{W}(a, b|c, d)$ is 1-dimensional, so there is no ambiguity by setting
\begin{enumerate}
    \item $(b, a| d, c)$ as the lowest weight vector $(b, a)\otimes(d, c)$ (identified as $v_{b-a}\otimes v_{d-c}$), 
    \item $X_1^kX_2^l.(b, a| d, c) = (b+k,a-k| d+l, c-l)$ (identified as $v_{b-a+2k}\otimes v_{d-c+2l}$).
\end{enumerate}
Then $Y_1, Y_2$ act accordingly. Moreover, $Y_iX_i$ acts as a constant determined by (\ref{eqn:sl2Y}) on any weight space.
The following is a detailed discussion that accounts for the proofs in Section~\ref{sphRep}.

\begin{enumerate}
\item In the proof of Theorem~\ref{thm:b!=a-1}, the module is $\Breve{W}(a, -a+2 |b-1, -b-1) = L(a, -a+2)\otimes L(b-1, -b-1)$. We are interested in $Y_1X_1.(1, 1| -1, -1)$ and $Y_2X_2.(1, 1| -1, -1)$:
\begin{enumerate}
    \item In the first component $L(a, -a+2) \cong L(2a-2)$, $Y_1X_1.(1, 1| -1, -1)$ corresponds to 
    \[
    v_0\xmapsto{x}v_2 = v_{-2a+2+2a}\xmapsto{y} a(2a-2+1-a)v_0 = (a^2-a)v_0.
    \]
    \item In the second component $L(b-1, -b-1) \cong L(2b)$, $Y_2X_2.(1, 1| -1, -1)$ corresponds to 
    \[
    v_0\xmapsto{x}v_2 = v_{-2b+2(b+1)}\xmapsto{y} (b+1)(2b+1-b-1)v_0 = (b^2+b)v_0.
    \]
\end{enumerate}

Together, $(Y_2X_2-Y_1X_1)(1, 1| -1, -1) = (b^2+b-a^2+a) (1, 1| -1, -1)= (b+a)(b-a+1)(1, 1| -1, -1)$. Here, $(1, 1| -1, -1)$ represents $v$ in the proof.

\item In the proof of Theorem~\ref{thm:b=a-1}, the module is $\Breve{W}(a, -a+1| a-1, -a)$. We are interested in $Y_1X_1.(1, 0| -1, 0)$ and $Y_2X_2.(1, 0|-1, 0)$:
\begin{enumerate}
\item In the first component $L(a, -a+1) \cong L(2a-1)$, $Y_1X_1.(1, 0, -1, 0)$ corresponds to 
\[
v_1\xmapsto{x}v_3 = v_{-2a+1+2(a+1)}\xmapsto{y} (a+1)(2a-1+1-a-1)v_1 = (a^2-1)v_1.
\]
\item In the second component $L(a-1, -a) \cong L(2a-1)$, $Y_2X_2.(1, 0| -1, 0)$ corresponds to 
\[
v_{-1}\xmapsto{x}v_1 = v_{-2a+1+2a}\xmapsto{y} a(2a-1+1-a)v_{-1} = a^2v_{-1}.
\]
\end{enumerate}

Together, $(\dangle{0110}+ Y_2X_2- Y_1X_1).(1, 0| -1, 0) = 0$.
Here, $(1, 0| -1, 0)$ represents $v$ in the proof.
    
\end{enumerate}

\section{The Weyl Groupoid Invariance}\label{sec:HCiso}

In this section, we provide a direct proof of the independence of $h_0$ in the description of $J_\alpha$ (\ref{eqn:Jalpha}), and a further digression on Weyl groupoid invariance studied by Sergeev and Veselov \cite{SV09}, and Serganova \cite{VSKacMoody}. We show that Proposition~\ref{prop:imGammaSupSym} is equivalent to a Weyl groupoid action formulation.

\begin{lemma} \label{lem:JAlphaIndep}
The subspace $J_\alpha$ is independent of choice of $h_0$.
\end{lemma}
\begin{proof}
Let $\{A_i\}$ be a basis for $\ak_\alpha^\perp$. 
Suppose $g_0 \neq h_0$ so that $b\left(g_0, g_0\right) = 0, \alpha(g_0) = 1$. Denote $\Spn_\mathbb{C} \{ g_0, t_\alpha \}$ as $U$. Let $\{B_i\}$ be a basis for $U^\perp$.
Let $J'_\alpha = \mathbb{C}\left [g_0^k t_\alpha^l  \right ] \Sk\left ( U^{ \perp} \right )$ as in (\ref{eqn:Jalpha}).
Given any monomial $g_0^k t_\alpha^l \prod B_i^{m_i} \in J'_\alpha$ with $l\geq \min\left \{ k, 1 \right \}$, we show that it is contained in $J_\alpha$. 

Suppose $g_0= ch_0+c_t t_\alpha + \sum c_j A_j$ for some $c, c_t$ and $c_j$. Note $\alpha(A_i) = b(t_\alpha, A_i) = 0$, and $\alpha (t_\alpha) = (\alpha, \alpha) = 0$. Applying $\alpha$ on $g_0$, we get $c = 1$, and
\begin{equation} \label{eqn:h0'}
    g_0= h_0 + c_t t_\alpha + \sum c_j A_j.
\end{equation}
Similarly, we write $B_i= C_ih_0 + D_i t_\alpha + \sum C_{ij} A_j$ for some $C_i, D_i$ and $C_{ij}$. Then $\alpha(B_i)=0$. Applying $\alpha$ on $B_i$, we get $C_i = 0$:
\begin{equation}\label{eqn:ai'}
    B_i= D_it_\alpha + \sum C_{ij} A_j.
\end{equation}
We expand $g_0^k t_\alpha^l \prod_{i = 1}^d B_i^{m_i}$ using (\ref{eqn:h0'}) and (\ref{eqn:ai'}) to see that the degree of $h_0$ is exactly $k$ while the degree of $t_\alpha$ is at least $l$.
Therefore, $J'_\alpha \subseteq J_\alpha$. The converse is similar.
\end{proof}

Following Sergeev and Vesolov \cite{SV2011Grob} (c.f. \cites{VSKacMoody, shifiWG}), we give the following definitions.
A category $\mathcal{C}$ is said to be \emph{small} if its collections of objects $\text{Obj}(\mathcal{C})$ and morphisms $\text{Hom}(\mathcal{C})$ are both sets. A \emph{groupoid} is a small category in which every morphism is invertible.
For two groupoids, the disjoint unions of their objects and morphisms constitute another groupoid.
Let $W$ be a group. An action of $W$ on a groupoid $\mathcal{G}$ is a group homomorphism from $W$ to $\mathrm{Aut}(\mathcal{G})$, the group of automorphisms (invertible endofunctors) of $\mathcal{G}$. 
Let $W$ be a group, $\mathcal{G}$ a groupoid and $W$ acts on $\mathcal{G}$.
The \emph{semidirect product} of $W$ and $\mathcal{G}$, denoted as $W \ltimes \mathcal{G}$, is given as follows:
\begin{enumerate}
        \item $\text{Obj}(W\ltimes \mathcal{G}) := \text{Obj}(\mathcal{G})$;
        \item $\Hom_{W \ltimes \mathcal{G}}(\alpha, \beta) := \{(\sigma, f): f \in \Hom_\mathcal{G}(\sigma (\alpha), \beta)\}$;
        \item For $(\sigma, f)\in \Hom_{W \ltimes \mathcal{G}}(\alpha, \beta)$ and $(\tau, g)\in \Hom_{W \ltimes \mathcal{G}}(\beta, \gamma)$, we define
        $(\tau, g)\circ (\sigma, f) := (\tau\sigma, g\circ \tau (f))
        \in \Hom_{W \ltimes \mathcal{G}}(\alpha, \gamma)$.
\end{enumerate}
Note that the composition is realized in $\mathcal{G}$ as in the diagram
$\tau\sigma(\alpha)\xrightarrow[]{\tau(f)}  \tau \beta  \xrightarrow[]{g}  \gamma$.

Suppose $\Sigma \subseteq V$ is a generalized root system as in \cite{S96GRS}. 
Denote the set of isotropic roots as $\iso{\Sigma}$.
Let $W_0$ be the Weyl group which acts on $\iso{\Sigma}$ naturally. 
The \emph{isotropic roots groupoid} $\zeroS$ is a groupoid with objects $\mathrm{Obj}(\zeroS) = \iso{\Sigma}$, and with non-trivial morphisms $\Bar{\tau}_\alpha:\alpha \rightarrow -\alpha$. Thus 
\[
\Hom_\zeroS(\alpha, \beta) = \begin{cases}
\varnothing & \text{ if } \beta \neq \pm\alpha \\
\{\Bar{\tau}_\alpha\} & \text{ if } \beta = -\alpha \\
\{\mathrm{id}_\alpha \} & \text{ if } \beta = \alpha 
\end{cases}.
\]

To extend the Weyl group action to morphisms in $\zeroS$, we set $\sigma (\Bar{\tau}_\alpha) = \Bar{\tau}_{\sigma(\alpha)}$ and $\sigma (\mathrm{id}_\alpha) = \mathrm{id}_{\sigma(\alpha)}$ for $\sigma \in W_0$.
We view $W_0$ as a groupoid with one single object $\ast$ whose morphisms are elements in $W_0$.
Then the \emph{Weyl groupoid} $\mathfrak{W}$ is defined as the disjoint union
    \[
    \mathfrak{W} := W_0 \sqcup  W_0 \ltimes \zeroS.
    \]

Let $V$ be a vector space. The \textit{affine groupoid} $\mathcal{AF}(V)$ is defined as the category whose objects are $V$ and all its affine subspaces, and its morphisms are all the affine isomorphisms between the affine subspaces of $V$.

\begin{Definition}\label{def:fcninv}
Let $\mathcal{G}$ be a groupoid and $V$ a vector space. We say $\mathcal{G}$ \textit{acts on} $V$ if there is a functor $\mathtt{C}: \mathcal{G} \rightarrow \mathcal{AF}(V)$.
For $F \in \Pk(V)$, we say $F$ is \textit{$\mathcal{G}$-invariant} if for all $x \xrightarrow{f} y$ in $\mathcal{G}$, we have $F\vert_{\mathtt{C}(x)} = F\vert_{\mathtt{C}(y)}\circ \mathtt{C}(f)$.
If $\mathcal{G}$ has only one object $\ast$ and $\mathtt{C}(\ast) = V$, then this degenerates to the usual definition of group invariance. We denote the set of $\mathcal{G}$-invariant polynomials as $\Pk(V)^\mathcal{G}$.
\end{Definition}

We specialize $\Sigma = \Sigma(\gk, \ak)$ and $V = \ak^*$. Thus $W_0$ is of Type \textit{BC}.
In \cite{SV2011Grob}, an action of $\mathfrak{W}$ on $V$ is described. Recall $\Hyp_\alpha = \{\mu\in V: (\mu, \alpha)=0\}$ and set $\tau_\alpha: \Hyp_\alpha \rightarrow \Hyp_\alpha$ by $\mu\mapsto \mu+\alpha$.
The action is given by the functor $\mathtt{I}:\mathfrak{W}\rightarrow \mathcal{AF}(\ak^*)$ that maps $\ast\in \mathrm{Obj}(W_0)$ to $\ak^*$, $\alpha \in \mathrm{Obj}( W_0 \ltimes \zeroS) = \iso{\Sigma}$ to $\Hyp_\alpha$, and 
\begin{align*}
    \mathtt{I}(\sigma) &= (\sigma:\ak^*\rightarrow \ak^*),\quad \sigma\in \Hom_{W_0}(\ast, \ast) \\
    \mathtt{I}((\sigma, \Bar{\tau}_{\alpha})) &= (\tau_{\alpha} \circ \sigma: \Hyp_{\sigma^{-1}(\alpha)}\rightarrow \Hyp_{\alpha}),\quad (\sigma, \Bar{\tau}_{\alpha})\in \Hom_{W_0 \ltimes \zeroS}(\sigma^{-1}(\alpha), -\alpha) \\
    \mathtt{I}((\sigma, \mathrm{id}_{\alpha})) &= (\mathrm{Id}_{\alpha}\circ \sigma: \Hyp_{\sigma^{-1}(\alpha)}\rightarrow \Hyp_{\alpha}),\quad (\sigma, \mathrm{id}_{\alpha})\in \Hom_{W_0 \ltimes \zeroS}(\sigma^{-1}(\alpha), \alpha).
\end{align*}


\begin{proposition} \label{prop:WGinv}
We have $\Ima \hcHomoGK = \mathfrak{P}(\ak^*)^{\mathfrak{W}}$.
\end{proposition}
\begin{proof}
By Proposition~\ref{prop:imGammaSupSym}, we have $\Ima \hcHomoGK = \RingEv(\ak^*)$. Also, $\mathfrak{P}(\ak^*)^{\mathfrak{W}} = \mathfrak{P}(\ak^*)^{W_0}\cap \mathfrak{P}(\ak^*)^{W_0\ltimes \zeroS}$.

Indeed, Condition (i) in Definition~\ref{defn:noshiftBCSym}, the usual even symmetry, is captured by the natural action of $W_0$ by permuting $x_i$ and $y_j$. Thus $F$ satisfying Condition (i) is equivalent to $F \in \mathfrak{P}(\ak^*)^{W_0}$.

Condition (ii') (specified from (ii) to our root system) says $F\vert_{\Hyp_\alpha}(X) = F\vert_{\Hyp_\alpha}(X+\alpha)$ for any $\alpha \in \iso{\Sigma}$. The $W_0\ltimes \zeroS$ invariance means $F\vert_{\mathtt{I}(\sigma^{-1}(\alpha))}(X) = F\vert_{\mathtt{I}(-\alpha)}\left(\mathtt{I}((\sigma, \Bar{\tau}_{\alpha}))X\right)$ for any $X\in \Hyp_{\sigma^{-1}(\alpha)}$. We observe that the right hand side is $F\vert_{\Hyp_\alpha}(\sigma X+\alpha)$. By specifying $\sigma = 1$, we recover Condition (ii'). If $F$ satisfies both (i) and (ii'), then one may take $X \in \Hyp_{\sigma^{-1}\alpha}$ so $F\vert_{\Hyp_{\sigma^{-1}(\alpha)}}(X) = F\vert_{\Hyp_{\sigma^{-1}(\alpha)}}(X+\sigma^{-1}(\alpha))$. But the right side is equal to $F\vert_{\Hyp_{(\alpha)}}(\sigma X+\alpha)$ by Condition (i). 

Thus, $F\in  \RingEv(\ak^*)$ is equivalent to $F\in \mathfrak{P}(\ak^*)^{\mathfrak{W}}$. 
\end{proof}






\bibliography{ref}

@article{SSS2020,
    AUTHOR = {Sahi, S. and Salmasian, H. and Serganova, V.},
     TITLE = {The {C}apelli eigenvalue problem for {L}ie superalgebras},
   JOURNAL = {Math. Z.},
  FJOURNAL = {Mathematische Zeitschrift},
    VOLUME = {294},
      YEAR = {2020},
    
     PAGES = {359--395},
      ISSN = {0025-5874},
   MRCLASS = {17B60},
  
MRREVIEWER = {Alberto Elduque},
       DOI = {10.1007/s00209-019-02289-7},
       URL = {https://doi.org/10.1007/s00209-019-02289-7},
}

@article{KS1993,
    AUTHOR = {Kostant, B. and Sahi, S.},
     TITLE = {Jordan algebras and {C}apelli identities},
   JOURNAL = {Invent. Math.},
  FJOURNAL = {Inventiones Mathematicae},
    VOLUME = {112},
      YEAR = {1993},
    
     PAGES = {657--664},
      ISSN = {0020-9910},
   MRCLASS = {17C20 (22E30 32M15)},
  
MRREVIEWER = {Dominique Lambert},
       DOI = {10.1007/BF01232451},
       URL = {https://doi.org/10.1007/BF01232451},
}

@article{A2012,
    AUTHOR = {Alldridge, A.},
     TITLE = {The {H}arish-{C}handra isomorphism for reductive symmetric
              superpairs},
   JOURNAL = {Transform. Groups},
  FJOURNAL = {Transformation Groups},
    VOLUME = {17},
      YEAR = {2012},
    
     PAGES = {889--919},
      ISSN = {1083-4362},
   MRCLASS = {17B35 (17B20 53C35 58C50)},

MRREVIEWER = {David Edward Hill},
       DOI = {10.1007/s00031-012-9200-y},
       URL = {https://doi.org/10.1007/s00031-012-9200-y},
}

@article{SZ2019PoSo,
    AUTHOR = {Sahi, S. and Zhang, G.},
     TITLE = {Positivity of {S}himura operators},
   JOURNAL = {Math. Res. Lett.},
  FJOURNAL = {Mathematical Research Letters},
    VOLUME = {26},
      YEAR = {2019},
    
     PAGES = {587--626},
      ISSN = {1073-2780},
   MRCLASS = {32N05 (05E05 11F60 17B10 32M15)},
  
MRREVIEWER = {Andre Reznikov},
       DOI = {10.4310/MRL.2019.v26.n2.a9},
       URL = {https://doi.org/10.4310/MRL.2019.v26.n2.a9},
}

@article{SV2004DQCP,
    AUTHOR = {Sergeev, A. N. and Veselov, A. P.},
     TITLE = {Deformed quantum {C}alogero--{M}oser problems and {L}ie
              superalgebras},
   JOURNAL = {Comm. Math. Phys.},
  FJOURNAL = {Communications in Mathematical Physics},
    VOLUME = {245},
      YEAR = {2004},
    
     PAGES = {249--278},
      ISSN = {0010-3616},
   MRCLASS = {81R12 (17B80 35J35)},
  
MRREVIEWER = {Martin L\'{e}gar\'{e}},
       DOI = {10.1007/s00220-003-1012-4},
       URL = {https://doi.org/10.1007/s00220-003-1012-4},
}

@article{SV2011Grob,
    AUTHOR = {Sergeev, A. N. and Veselov, A. P.},
     TITLE = {Grothendieck rings of basic classical {L}ie superalgebras},
   JOURNAL = {Ann. of Math. (2)},
  FJOURNAL = {Annals of Mathematics. Second Series},
    VOLUME = {173},
      YEAR = {2011},
    
     PAGES = {663--703},
      ISSN = {0003-486X},
   MRCLASS = {17B15 (17B20 18F30 19A49)},
  
MRREVIEWER = {Jos\'{e} Carlos Santos},
       DOI = {10.4007/annals.2011.173.2.2},
       URL = {https://doi.org/10.4007/annals.2011.173.2.2},
}

@article{AHZ2010Crtf,
    AUTHOR = {Alldridge, A. and Hilgert, J. and Zirnbauer,
              M. R.},
     TITLE = {Chevalley's restriction theorem for reductive symmetric
              superpairs},
   JOURNAL = {J. Algebra},
  FJOURNAL = {Journal of Algebra},
    VOLUME = {323},
      YEAR = {2010},
    
     PAGES = {1159--1185},
      ISSN = {0021-8693},
   MRCLASS = {17B20},
  
MRREVIEWER = {Frank Klinker},
       DOI = {10.1016/j.jalgebra.2009.11.014},
       URL = {https://doi.org/10.1016/j.jalgebra.2009.11.014},
}

@book{musson2012lie,
  AUTHOR = {Musson, I. M.},
     TITLE = {Lie superalgebras and enveloping algebras},
    SERIES = {Graduate Studies in Mathematics},
    VOLUME = {131},
 PUBLISHER = {American Mathematical Society, Providence, RI},
      YEAR = {2012},
     PAGES = {xx+488},
      ISBN = {978-0-8218-6867-6},
   MRCLASS = {17-02 (16S30 17B35)},
  
MRREVIEWER = {Aleksandr Nikolaevich Sergeev},
       DOI = {10.1090/gsm/131},
       URL = {https://doi.org/10.1090/gsm/131},
}

@article{CW2000HDfL,
    AUTHOR = {Cheng, S.-J. and Wang, W.},
     TITLE = {Howe duality for {L}ie superalgebras},
   JOURNAL = {Compositio Math.},
  FJOURNAL = {Compositio Mathematica},
    VOLUME = {128},
      YEAR = {2001},
    
     PAGES = {55--94},
      ISSN = {0010-437X},
   MRCLASS = {17B10},
  
MRREVIEWER = {Stanislav Z. Pakuliak},
       DOI = {10.1023/A:1017594504827},
       URL = {https://doi.org/10.1023/A:1017594504827},
}

@article {Okounkov98,
    AUTHOR = {Okounkov, A.},
     TITLE = {{${BC}$}-type interpolation {M}acdonald polynomials and
              binomial formula for {K}oornwinder polynomials},
   JOURNAL = {Transform. Groups},
  FJOURNAL = {Transformation Groups},
    VOLUME = {3},
      YEAR = {1998},
    
     PAGES = {181--207},
      ISSN = {1083-4362},
   MRCLASS = {33D80 (05E05)},
  
MRREVIEWER = {Yasmine B. Sanderson},
       DOI = {10.1007/BF01236432},
       URL = {https://doi.org/10.1007/BF01236432},
}

@article {SV09,
    AUTHOR = {Sergeev, A. N. and Veselov, A. P.},
     TITLE = {{$BC_\infty$} {C}alogero--{M}oser operator and super {J}acobi
              polynomials},
   JOURNAL = {Adv. Math.},
  FJOURNAL = {Advances in Mathematics},
    VOLUME = {222},
      YEAR = {2009},
    
     PAGES = {1687--1726},
      ISSN = {0001-8708},
   MRCLASS = {33C52 (05E05 33C45 33C80 37J35 81R12)},
  
       DOI = {10.1016/j.aim.2009.06.014},
       URL = {https://doi.org/10.1016/j.aim.2009.06.014},
}

@book {Helgason2,
    AUTHOR = {Helgason, S.},
     TITLE = {Groups and geometric analysis},
    SERIES = {Mathematical Surveys and Monographs},
    VOLUME = {83},
      NOTE = {Integral geometry, invariant differential operators, and
              spherical functions,
              Corrected reprint of the 1984 original},
 PUBLISHER = {American Mathematical Society, Providence, RI},
      YEAR = {2000},
     PAGES = {xxii+667},
      ISBN = {0-8218-2673-5},
   MRCLASS = {22-02 (22E30 22E46 43A85 43A90 44A12 53-02 58-02)},
  
       DOI = {10.1090/surv/083},
       URL = {https://doi.org/10.1090/surv/083},
}

@article {Shimura90,
    AUTHOR = {Shimura, G.},
     TITLE = {Invariant differential operators on {H}ermitian symmetric
              spaces},
   JOURNAL = {Ann. of Math. (2)},
  FJOURNAL = {Annals of Mathematics. Second Series},
    VOLUME = {132},
      YEAR = {1990},
    
     PAGES = {237--272},
      ISSN = {0003-486X},
   MRCLASS = {22E30 (17B20 35A30)},
  
MRREVIEWER = {Floyd L. Williams},
       DOI = {10.2307/1971523},
       URL = {https://doi.org/10.2307/1971523},
}

@article{a2015spherical,
  AUTHOR = {Alldridge, A. and Schmittner, S.},
     TITLE = {Spherical representations of {L}ie supergroups},
   JOURNAL = {J. Funct. Anal.},
  FJOURNAL = {Journal of Functional Analysis},
    VOLUME = {268},
      YEAR = {2015},
    
     PAGES = {1403--1453},
      ISSN = {0022-1236},
   MRCLASS = {22E46 (17B10 22E30 22E45 58C50)},
  
MRREVIEWER = {Alexander Isaakovich Shtern},
       DOI = {10.1016/j.jfa.2014.11.018},
       URL = {https://doi.org/10.1016/j.jfa.2014.11.018},
}

@article {FK90,
    AUTHOR = {Faraut, J. and Kor\'{a}nyi, \'{A}.},
     TITLE = {Function spaces and reproducing kernels on bounded symmetric
              domains},
   JOURNAL = {J. Funct. Anal.},
  FJOURNAL = {Journal of Functional Analysis},
    VOLUME = {88},
      YEAR = {1990},
    
     PAGES = {64--89},
      ISSN = {0022-1236},
   MRCLASS = {32M15 (22E46 32A35 43A85 46E20)},
  
MRREVIEWER = {Harald Upmeier},
       DOI = {10.1016/0022-1236(90)90119-6},
       URL = {https://doi-org.proxy.libraries.rutgers.edu/10.1016/0022-1236(90)90119-6},
}

@article {Schmid69,
    AUTHOR = {Schmid, W.},
     TITLE = {Die {R}andwerte holomorpher {F}unktionen auf hermitesch
              symmetrischen {R}\"{a}umen},
   JOURNAL = {Invent. Math.},
  FJOURNAL = {Inventiones Mathematicae},
    VOLUME = {9},
      YEAR = {1969/70},
     PAGES = {61--80},
      ISSN = {0020-9910},
   MRCLASS = {32.32 (22.00)},
  
MRREVIEWER = {A. Kor\'{a}nyi},
       DOI = {10.1007/BF01389889},
       URL = {https://doi-org.proxy.libraries.rutgers.edu/10.1007/BF01389889},
}

@book {CWBook,
    AUTHOR = {Cheng, S.-J. and Wang, W.},
     TITLE = {Dualities and representations of {L}ie superalgebras},
    SERIES = {Graduate Studies in Mathematics},
    VOLUME = {144},
 PUBLISHER = {American Mathematical Society, Providence, RI},
      YEAR = {2012},
     PAGES = {xviii+302},
      ISBN = {978-0-8218-9118-6},
   MRCLASS = {17B10},
  
MRREVIEWER = {Aleksandr Nikolaevich Sergeev},
       DOI = {10.1090/gsm/144},
       URL = {https://doi-org.proxy.libraries.rutgers.edu/10.1090/gsm/144},
}

@incollection {OO06,
    AUTHOR = {Okounkov, A. and Olshanski, G.},
     TITLE = {Limits of {$BC$}-type orthogonal polynomials as the number of
              variables goes to infinity},
 BOOKTITLE = {Jack, {H}all--{L}ittlewood and {M}acdonald polynomials},
    SERIES = {Contemp. Math.},
    VOLUME = {417},
     PAGES = {281--318},
 PUBLISHER = {Amer. Math. Soc., Providence, RI},
      YEAR = {2006},
   MRCLASS = {33C52 (05E35 43A90)},
  
MRREVIEWER = {Hjalmar Rosengren},
       DOI = {10.1090/conm/417/07928},
       URL = {https://doi.org/10.1090/conm/417/07928},
}

@book {WW5ed,
    AUTHOR = {Whittaker, E. T. and Watson, G.  N.},
     TITLE = {A course of modern analysis---an introduction to the general
              theory of infinite processes and of analytic functions with an
              account of the principal transcendental functions},
    
   EDITION = {Fifth},
      
 PUBLISHER = {Cambridge University Press, Cambridge},
      YEAR = {2021},
     PAGES = {lii+668},
      ISBN = {978-1-316-51893-9},
   MRCLASS = {30-01 (01A75 30-03)},
  
}

@article {S96GRS,
    AUTHOR = {Serganova, V.},
     TITLE = {On generalizations of root systems},
   JOURNAL = {Comm. Algebra},
  FJOURNAL = {Communications in Algebra},
    VOLUME = {24},
      YEAR = {1996},
   
     PAGES = {4281--4299},
      ISSN = {0092-7872},
   MRCLASS = {17B20 (17B65)},
  
MRREVIEWER = {Mohammad Hailat},
       DOI = {10.1080/00927879608825814},
       URL = {https://doi-org.proxy.libraries.rutgers.edu/10.1080/00927879608825814},
}

@book {MacdSymFun,
    AUTHOR = {Macdonald, I. G.},
     TITLE = {Symmetric functions and {H}all polynomials},
    SERIES = {Oxford Mathematical Monographs},
   EDITION = {Second},
      NOTE = {With contributions by A. Zelevinsky,
              Oxford Science Publications},
 PUBLISHER = {The Clarendon Press, Oxford University Press, New York},
      YEAR = {1995},
     PAGES = {x+475},
      ISBN = {0-19-853489-2},
   MRCLASS = {05E05 (05-02 20C30 20C33 20K01 33C80 33D80)},
  
MRREVIEWER = {John R. Stembridge},
}

@book {ShermanThesis,
    AUTHOR = {Sherman, A. C.},
     TITLE = {Spherical and {S}ymmetric {S}upervarieties},
      NOTE = {Thesis (Ph.D.)--University of California, Berkeley},
 PUBLISHER = {ProQuest LLC, Ann Arbor, MI},
      YEAR = {2020},
     PAGES = {185},
      ISBN = {979-8678-17141-2},
   MRCLASS = {Thesis},
  
       URL =
              {http://gateway.proquest.com/openurl?url_ver=Z39.88-2004&rft_val_fmt=info:ofi/fmt:kev:mtx:dissertation&res_dat=xri:pqm&rft_dat=xri:pqdiss:27997053},
}

@incollection {S94,
    AUTHOR = {Sahi, S.},
     TITLE = {The spectrum of certain invariant differential operators
              associated to a {H}ermitian symmetric space},
 BOOKTITLE = {Lie theory and geometry},
    SERIES = {Progr. Math.},
    VOLUME = {123},
     PAGES = {569--576},
 PUBLISHER = {Birkh\"{a}user Boston, Boston, MA},
      YEAR = {1994},
   MRCLASS = {43A85 (22E30 22E46)},
  
MRREVIEWER = {Ji Sheng Na},
       DOI = {10.1007/978-1-4612-0261-5\_21},
       URL = {https://doi.org/10.1007/978-1-4612-0261-5_21},
}

@article {KacHC,
    AUTHOR = {Kac, V. G.},
     TITLE = {Laplace operators of infinite-dimensional {L}ie algebras and
              theta functions},
   JOURNAL = {Proc. Nat. Acad. Sci. U.S.A.},
  FJOURNAL = {Proceedings of the National Academy of Sciences of the United
              States of America},
    VOLUME = {81},
      YEAR = {1984},
    
     PAGES = {645--647},
      ISSN = {0027-8424},
   MRCLASS = {17B67 (17B70)},
  
       DOI = {10.1073/pnas.81.2.645},
       URL = {https://doi.org/10.1073/pnas.81.2.645},
}

@article {Sher22Iwasawa,
    AUTHOR = {Sherman, A. C.},
     TITLE = {Iwasawa decomposition for {L}ie superalgebras},
   JOURNAL = {J. Lie Theory},
  FJOURNAL = {Journal of Lie Theory},
    VOLUME = {32},
      YEAR = {2022},
    
     PAGES = {973--996},
      ISSN = {0949-5932},
   MRCLASS = {17B20 (17B22 17B40)},
  
MRREVIEWER = {Ian\ M.\ Musson},
}

@article {SerganovaRLSA&SSS,
    AUTHOR = {Serganova, V.},
     TITLE = {Classification of simple real {L}ie superalgebras and
              symmetric superspaces},
   JOURNAL = {Funktsional. Anal. i Prilozhen.},
  FJOURNAL = {Akademiya Nauk SSSR. Funktsional\cprime ny\u{\i} Analiz i ego
              Prilozheniya},
    VOLUME = {17},
      YEAR = {1983},
    
     PAGES = {46--54},
      ISSN = {0374-1990},
   MRCLASS = {17B70},
  
}

@article {SerganovaAuto,
    AUTHOR = {Serganova, V.},
     TITLE = {Automorphisms of simple {L}ie superalgebras},
   JOURNAL = {Izv. Akad. Nauk SSSR Ser. Mat.},
  FJOURNAL = {Izvestiya Akademii Nauk SSSR. Seriya Matematicheskaya},
    VOLUME = {48},
      YEAR = {1984},
    
     PAGES = {585--598},
      ISSN = {0373-2436},
   MRCLASS = {17B67 (17B40 17B70)},
  
MRREVIEWER = {James\ E.\ Humphreys},
}

@article {FZ21,
    AUTHOR = {Fang, H. and Zhu, S.},
     TITLE = {A vanishing theorem for the canonical blow-ups of {G}rassmann
              manifolds},
   JOURNAL = {Complex Manifolds},
  FJOURNAL = {Complex Manifolds},
    VOLUME = {8},
      YEAR = {2021},
    
     PAGES = {415--439},
      ISSN = {2300-7443},
   MRCLASS = {14M17 (32G05 32L20)},
  
MRREVIEWER = {Tuan\ Dinh\ Huynh},
       DOI = {10.1515/coma-2020-0126},
       URL = {https://doi.org/10.1515/coma-2020-0126},
}

@article {CFherm,
    AUTHOR = {Chuah, M.-K. and Fioresi, R.},
     TITLE = {Hermitian real forms of contragredient {L}ie superalgebras},
   JOURNAL = {J. Algebra},
  FJOURNAL = {Journal of Algebra},
    VOLUME = {437},
      YEAR = {2015},
     PAGES = {161--176},
      ISSN = {0021-8693,1090-266X},
   MRCLASS = {17B20 (17B22)},
  
MRREVIEWER = {Willem\ Adriaan\ de Graaf},
       DOI = {10.1016/j.jalgebra.2015.04.018},
       URL = {https://doi.org/10.1016/j.jalgebra.2015.04.018},
}

@incollection {VSKacMoody,
    AUTHOR = {Serganova, V.},
     TITLE = {Kac--{M}oody superalgebras and integrability},
 BOOKTITLE = {Developments and trends in infinite-dimensional {L}ie theory},
    SERIES = {Progr. Math.},
    VOLUME = {288},
     PAGES = {169--218},
 PUBLISHER = {Birkh\"{a}user Boston, Boston, MA},
      YEAR = {2011},
      ISBN = {978-0-8176-4740-7},
   MRCLASS = {17B67 (17B20)},
  
MRREVIEWER = {A.\ L.\ Onishchik},
       DOI = {10.1007/978-0-8176-4741-4\_6},
       URL = {https://doi.org/10.1007/978-0-8176-4741-4_6},
}

@article {shifiWG,
    AUTHOR = {Reif, S.},
     TITLE = {Grothendieck rings of queer {L}ie superalgebras},
   JOURNAL = {Proc. Amer. Math. Soc.},
  FJOURNAL = {Proceedings of the American Mathematical Society},
    VOLUME = {151},
      YEAR = {2023},
    
     PAGES = {3201--3211},
      ISSN = {0002-9939,1088-6826},
   MRCLASS = {17B10 (17B20)},
  
       DOI = {10.1090/proc/16071},
       URL = {https://doi.org/10.1090/proc/16071},
}

@misc{SZ23,
    title={Supersymmetric {S}himura operators and interpolation polynomials}, 
  author={S. Sahi and S. Zhu},
      year={2023},
      eprint={2312.08661},
      archivePrefix={arXiv},
      primaryClass={math.RT},
      url={https://arxiv.org/abs/2312.08661}, 
}

\end{document}